\newcommand{\be}{\mathbf{e}}
\newcommand{\bff}{\mathbf{f}}
\newcommand{\bp}{\mathbf{p}}
\newcommand{\bq}{\mathbf{q}}
\newcommand{\bx}{\mathbf{x}}
\newcommand{\by}{\mathbf{y}}
\newcommand{\algout}[1]{{\cal A}(X,{#1})}
\newcommand{\eqnref}[1]{Eq.~\eqref{#1}}
\newcommand{\xx}{\mathbf{x}}
\newcommand{\xs}[1]{\xx^{({#1})}}
\newcommand{\yy}{\mathbf{y}}
\newcommand{\zz}{\mathbf{z}}
\newcommand{\M}{\mathbf{M}}
\newcommand{\Rn}{{\R^n}}
\newcommand{\Rm}{{\R^m}}
\newcommand{\Rk}{{\R^k}}
\newcommand{\R}{\mathbb{R}}
\newcommand{\Rext}{\overline{\R}}
\newcommand{\Hfcn}[1]{H_{#1}}
\newcommand{\Hk}{\Hfcn{k}}
\newcommand{\ffcn}[1]{f_{#1}}
\newcommand{\fI}{\ffcn{I}}
\newcommand{\Gfcn}[1]{G_{#1}}
\newcommand{\GI}{\Gfcn{I}}
\newcommand{\indX}{\delta_{X}}
\newcommand{\xopt}{\xx^*}
\newcommand{\xeps}{\xx_{\eps}}
\newcommand{\xhat}{\hat{\xx}}
\newcommand{\hatx}{\hat{x}}
\newcommand{\xopti}{x^*}
\newcommand{\braces}[1]{\left\{#1\right\}}
\newcommand{\parens}[1]{\left(#1\right)}
\newcommand{\transvec}[1]{\left(#1\right)^\top}
\newcommand{\bbR}{\mathbb{R}}
\newcommand{\hbx}{\hat{\bx}}
\DeclareMathOperator{\lexmax}{lexmax}
\newcommand{\eps}{\varepsilon}
\newcommand{\goes}{\rightarrow}
\newtheorem{theorem}{Theorem}
\newtheorem{lemma}{Lemma}
\newtheorem{definition}{Definition}
\newtheorem{proposition}{Proposition}
\providecommand{\norm}[1]{\left\lVert#1\right\rVert}
\newcommand{\barpair}[2]{(#1 ~|~ #2)}
\newtheoremstyle{mysubproof}
  {8pt plus 2pt minus 4pt} 
  {8pt plus 2pt minus 4pt} 
  {\normalfont}  
  {0pt}       
  {\itshape}  
  {.}         
  {.5em}      
  {}          
\newtheoremstyle{mysubclaim}
  {8pt plus 2pt minus 4pt} 
  {8pt plus 2pt minus 4pt} 
  {\normalfont}  
  {0pt}       
  {\itshape}  
  {.}         
  {.5em}      
  {}          
\declaretheorem[title=Claim,within=theorem,style=mysubclaim]{claimpx}
\declaretheorem[title={\hskip0pt Proof of claim},numbered=no,style=mysubproof,qed=$\lozenge$]{proofx}
\begin{document}

\title{Lexicographic Optimization: Algorithms and Stability}
\author{Jacob Abernethy$^{1, 2}$ \and Robert E. Schapire$^3$ \and Umar Syed$^2$}
\date{%
    $^1$College of Computing, Georgia Institute of Technology\\%
    $^2$Google Research\\%
    $^3$Microsoft Research\\[2ex]%
}

\maketitle

\begin{abstract}%
A lexicographic maximum of a set $X \subseteq  \bbR^n$ is a vector in $X$ whose smallest component is as large as possible, and subject to that requirement, whose second smallest component is as large as possible, and so on for the third smallest component, etc. Lexicographic maximization has numerous practical and theoretical applications, including fair resource allocation, analyzing the implicit regularization of learning algorithms, and characterizing refinements of game-theoretic equilibria. We prove that a minimizer in $X$ of the exponential loss function $L_c(\bx) = \sum_i \exp(-c x_i)$ converges to a lexicographic maximum of $X$ as $c \goes \infty$, 
provided that $X$ is
\emph{stable} in the sense that a well-known iterative method for
finding a lexicographic maximum of $X$ cannot be made to fail simply
by reducing the required quality of each iterate by an arbitrarily
tiny degree.
Our result holds for both near and exact minimizers of the exponential loss, while earlier convergence results made much stronger assumptions about the set $X$ and only held for the exact minimizer. We are aware of no previous results showing a connection between the iterative method for computing a lexicographic maximum and exponential loss minimization.
We show that every convex polytope is stable, but that there exist
compact, convex sets that are not stable.
We also provide the first analysis of the convergence rate of an exponential loss minimizer (near or exact) and discover a curious dichotomy: While the two smallest components of the vector converge to the lexicographically maximum values very quickly (at roughly the rate $\frac{\log n}{c}$), all other components can converge arbitrarily slowly.
\end{abstract}

\section{Introduction}
\label{sec:intro}

A \emph{lexicographic maximum} of a set $X \subseteq \bbR^n$ is a vector in $X$ that is at least as large as any other vector in $X$ when sorting their components in non-decreasing order and comparing them lexicographically. For example, if
\[
X = \left\{\bx_1 = \begin{pmatrix} 5\\ 2\\ 4 \end{pmatrix}, \bx_2 = \begin{pmatrix} 2\\ 6\\ 3 \end{pmatrix}, \bx_3 = \begin{pmatrix} 8\\ 7\\ 1 \end{pmatrix}\right\}
\]
then the lexicographic maximum of $X$ is $\bx_1$, since both $\bx_1$ and $\bx_2$ have the largest smallest components, and $\bx_1$ has a larger second smallest component than $\bx_2$. Infinite and unbounded sets can also contain a lexicographic maximum. 

One of the first applications of lexicographic maximization was fair bandwidth allocation in computer networks \citep{hayden1981voice, MoWalrand98, LeBoudec2000}. Lexicographic maximimization avoids a `rich-get-richer' allocation of a scarce resource, since a lexicographic maximum is both Pareto optimal and has the property that no component can be increased without decreasing another \emph{smaller} component. More recently, \cite{diana2021lexicographically} considered lexicographic maximization as an approach to fair regression, where the components of the lexicographic maximum represent the performance of a model on different demographic groups. \cite{bei2022truthful} studied the ``cake sharing'' problem in mechanism design and showed that assigning a lexicographically maximum allocation to the agents is a truthful mechanism.

\cite{Rosset2004} and \cite{Nacson2019} used lexicographic maximization to analyze the implicit regularization of learning algorithms that are based on minimizing an objective function. In particular, they showed for certain objective functions and model classes that the  vector of model predictions converges to a lexicographic maximum.

In game theory, \cite{dresher1961mathematics} described an equilibrium concept in which each player's payoff vector is a lexicographic maximum of the set of their possible payoff vectors. \cite{van1991stability} showed that in a zero-sum game this concept is equivalent to a \emph{proper equilibrium}, a well-known refinement of a Nash equilibrium. Lexicographic maximization is also used to define the \emph{nucleolus} of a cooperative game \citep{schmeidler1969nucleolus}.

Two methods for computing a lexicographic maximum have been described in the literature. The first method is an iterative algorithm that solves $n$ optimization problems, with the $i$th iteration computing the $i$th smallest component of the lexicographic maximum, and is guaranteed to find the lexicographic maximum if one exists. This method is often called the \emph{progressive filling algorithm} \citep{bertsekas2021data}. The second method minimizes the \emph{exponential loss}, which is defined
\begin{align}\label{eq:exploss}
    L_c(\bx) = \sum_{i=1}^n \exp(-c x_i),
\end{align}
where $\bx \in \bbR^n$ and $c \ge 0$. Previous work has shown that a minimizer in $X$ of $L_c(\bx)$ may converge to a lexicographic maximum of $X$ as $c \goes \infty$, but the conditions placed on $X$ to ensure convergence were quite restrictive. \citet{MoWalrand98}, \cite{LeBoudec2000} and \cite{Rosset2004} all assumed that $X$ is a convex polytope, while \citet{Nacson2019} assumed that $X$ is the image of a simplex under a continuous, positive-homogeneous mapping. These results were all based on asymptotic analyses, and did not establish any bounds on the rate of convergence to a lexicographic maximum. As far as we know, no previous work has drawn a connection between the two methods for computing a lexicographic maximum.

\paragraph{Our contributions:} The iterative algorithm described above is guaranteed to find a lexicographic maximum of a set if the optimization problems in each iteration are solved exactly. But if the optimization problems are solved only approximately, then the output of the algorithm can be far from a lexicographic maximum, even if the approximation error is arbitrarily small (but still non-zero). We define a property called \emph{lexicographic stability}, which holds for a set $X \subseteq \bbR^n$ whenever this pathological situation does not occur, and prove that it has an additional powerful implication: Any vector $\bx_c \in X$ that is less than $\exp(-O(c))$ from the minimum value of $L_c(\bx)$ converges to a lexicographic maximum of $X$ as $c \goes \infty$. By proving convergence for all non-pathological sets, we significantly generalize existing convergence criteria for exponential loss minimization. 

We show that all convex polytopes are stable, thereby subsuming most of
the previous works mentioned above.
On the other hand, we show that sets that are
convex and compact but not polytopes need not be stable, in general.
We also show that our convergence result does not hold generally when
stability is not assumed by constructing a set $X$ that is not lexicographically stable, and for which $\bx_c$ is bounded away from a lexicographic maximum of $X$ for all sufficiently large $c$, even if $\bx_c$ is an exact minimizer of the exponential loss. 

We also study the rate at which $\bx_c$ approaches a lexicographic maximum, and find a stark discrepancy for different components of $\bx_c$. The smallest and second smallest components of $\bx_c$ are never more than $O\left(\frac{\log n}{c}\right)$ below their lexicographically maximum values, even if $X$ is not lexicographically stable. However, all other components of $\bx_c$ can remain far below their lexicographically maximum values for arbitrarily large $c$, even if $\bx_c$ is an exact minimizer of the exponential loss and $X$ is lexicographically stable with a seemingly benign structure (it can be a single line segment).

Finally, we prove that the multiplicative weights algorithm \citep{littlestone1994weighted,FreundSc99} is guaranteed to converge to the lexicographic maximum of a convex polytope, essentially because it minimizes the exponential loss. While \cite{Syed2010} proved that the multiplicative weights algorithm diverges from the lexicographic maximum when the learning rate is constant, we guarantee convergence by setting the learning rate to $O(1/t)$ in each iteration $t$.

\paragraph{Additional related work:}
\label{sec:related}

Lexicographic maximization is often applied to multi-objective optimization problems where, for a given function $\bff : \Theta \rightarrow \bbR^n$, the goal is to find $\theta^* \in \Theta$ such that $\bff(\theta^*)$ is a lexicographic maximum of the set $X = \{\bff(\theta) : \theta \in \Theta\}$ \citep{luss1986resource}. As in our work, this approach sorts the components of each vector before performing a lexicographic comparison, which contrasts with other work in which an ordering of the dimensions is fixed in advance \citep{sherali1983preemptive, martinez1987lexicographical}.

\cite{diana2021lexicographically} and \cite{henzinger2022leximax} critiqued the lexicographic maximum as a fairness solution concept because of its potential for instability when subject to small perturbations. They gave examples demonstrating that instability can occur, but did not relate the instability to the problem of computing a lexicographic maximum via loss minimization. The primary objective of our work is to provide a much fuller characterization of this instability and to explore its implications.

In the analytical framework used by \cite{Rosset2004} and \cite{Nacson2019}, larger values of $c$ for the exponential loss $L_c(\bx)$ correspond to weaker explicit regularization by a learning algorithm, so understanding the behavior of the minimizer of $L_c(\bx)$ as $c \goes \infty$ helps to characterize the algorithm's implicit regularization.

\citet{hartman2023leximin} appear to contradict one of our key negative results by showing that if the iterative algorithm uses an approximate solver in each iteration then its output will always be close to a lexicographic maximum, provided that the approximation error is sufficiently small (see their Theorem 9). However, this apparent discrepancy with our results is in fact due only to us using a different and incompatible definition of ``closeness.'' See Appendix \ref{sec:hartman} for a discussion.

\section{Preliminaries}
\label{sec:prelim}

For any non-negative integer $n$ let $[n] := \{1, \ldots, n\}$, and note that $[0]$ is the empty set. Let $\bbR_{\ge 0} := \{x \in \bbR : x \ge 0\}$ be the non-negative reals. Let $x_i$ be the $i$th component of the vector $\bx \in \bbR^n$. Let $\norm{X}_\infty := \sup_{\bx \in X} \max_i |x_i|$ be the largest $\ell_\infty$ norm of any vector in $X \subseteq \bbR^n$.

For each $i \in [n]$ let $\sigma_i : \bbR^n \rightarrow \bbR$ be the \emph{$i$th sorting function}, such that $\sigma_i(\bx)$ is the $i$th smallest component of $\bx \in \bbR^n$. For example, if $\bx = (2, 1, 2)^\top$ then $\sigma_1(\bx) = 1$, $\sigma_2(\bx) = 2$ and $\sigma_3(\bx) = 2$. We define a total order $\geq_\sigma$ on vectors in $\bbR^n$ as follows: for any points $\bx, \bx' \in \bbR^n$ we say that $\bx \geq_\sigma \bx'$ if and only if $\bx = \bx'$ or $\sigma_i(\bx) > \sigma_i(\bx')$ for the smallest $i \in [n]$ such that $\sigma_i(\bx) \neq \sigma_i(\bx')$.

\begin{definition} \label{defn:lexopt} A \emph{lexicographic maximum} of a set $X \subseteq \bbR^n$ is a vector $\bx^* \in X$ for which $\bx^* \geq_\sigma \bx$ for every $\bx \in X$.
Let $\lexmax X$ be the set of all lexicographic maxima of $X$.
\end{definition}
While $\lexmax X$ can be empty, this can only occur if $X$ is empty or not compact (see Theorem~\ref{thm:exists}).

For notation, we always write $\bx^*$ to denote a lexicographic maximum of $X$. Also, for all $c, \gamma \ge 0$ we write $\bx_{c, \gamma}$ to denote an arbitrary vector in $X$ that satisfies
\[
L_c(\bx_{c, \gamma}) \le \inf_{x \in X} L_c(\bx) + \gamma\exp(-c\norm{X}_\infty).
\]
In other words, $\bx_{c, \gamma}$ is a near minimizer in $X$ of the exponential loss if $\gamma$ is small, and $\bx_{c, 0}$ is an exact minimizer. The notation $\bx^*$ and $\bx_{c, \gamma}$ suppresses the dependence on $X$, which will be clear from context. While $\bx^*$ and $\bx_{c, 0}$ do not exist in every set $X$, we are only interested in cases where they do, and implicitly make this assumption throughout our analysis. An exception is when we construct a set $X$ to be a counterexample. In these cases we explicitly prove that $\bx^*$ and $\bx_{c, 0}$ exist. Also, a set may contain multiple vectors that satisfy the definitions of $\bx^*$ or $\bx_{c, \gamma}$, and our results apply no matter how they are chosen.

Our goal is to characterize when $\bx_{c, \gamma}$ is ``close'' to $\bx^*$, where we use the following definition of closeness.

\begin{definition} \label{defn:distort} The \emph{lexicographic distortion} between $\bx, \bx' \in \bbR^n$ with respect to $I \subseteq [n]$ is 
\[
d_I\barpair{\bx}{\bx'} \triangleq \max_{k \in I} [ \max\{0,\sigma_k(\bx) - \sigma_k(\bx')\}].
\]
If $I = [n]$ we abbreviate $d\barpair{\bx}{\bx'} \triangleq d_I\barpair{\bx}{\bx'}$.
\end{definition}

Lexicographic distortion is useful for quantifying the closeness of $\bx_{c,\gamma}$ to $\bx^*$ because $d\barpair{\bx^*}{\bx_{c,\gamma}} = 0$ if and only if $\bx_{c,\gamma} \in \lexmax X$. It is important to note, however, that $d_I\barpair{\cdot}{\cdot}$ is \textit{not} a symmetric function, and most typically $d_I\barpair{\bx}{\bx'} \ne d_I\barpair{\bx'}{\bx}$

\paragraph{Problem statement:} We want to describe conditions on $X \subseteq \bbR^n, I \subseteq [n], c \ge 0$ and $\gamma \ge 0$ which ensure that $d_I\barpair{\bx^*}{\bx_{c,\gamma}}$ is almost equal to zero. We are particularly interested in cases where $I 
= [n]$, since this implies that $\bx_{c,\gamma}$ is close to a lexicographic maximum. Also, we want to identify counterexamples where $d_I\barpair{\bx^*}{\bx_{c,\gamma}}$ is far from zero. Since $d\barpair{\bx^*}{\bx_{c,\gamma}} \ge d_I\barpair{\bx^*}{\bx_{c,\gamma}}$ for all $I \subseteq [n]$, this implies that $\bx_{c,\gamma}$ is far from a lexicographic maximum.

\section{Basic properties}

We prove several basic properties of lexicographic maxima that will be useful in our subsequent analysis. We first show that conditions which suffice to ensure that a subset of $\bbR$ contains a maximum also ensure that a subset of $\bbR^n$ contains a lexicographic maximum.

\begin{theorem} \label{thm:exists} If $X  \subseteq \bbR^n$ is non-empty and compact then $\lexmax X$ is non-empty.\end{theorem}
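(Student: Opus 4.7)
The plan is to construct a lexicographic maximum via a progressive‐filling style iterative restriction of $X$. Define a descending chain of subsets
\[
X = X_0 \supseteq X_1 \supseteq \cdots \supseteq X_n
\]
by setting, for each $k \in [n]$,
\[
v_k := \sup_{\bx \in X_{k-1}} \sigma_k(\bx), \qquad X_k := \{\bx \in X_{k-1} : \sigma_k(\bx) = v_k\}.
\]
The plan is then to show, by induction on $k$, that each $X_k$ is non-empty and compact, and finally that any $\bx^* \in X_n$ lies in $\lexmax X$.

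The two ingredients I would invoke are: (i) each sorting function $\sigma_k : \bbR^n \to \bbR$ is continuous (the $k$th order statistic is continuous, for instance because it can be written as $\min_{|S|=n-k+1} \max_{i \in S} x_i$, or directly because sorting is a continuous operation in any coordinate-wise sense); and (ii) a continuous real-valued function on a non-empty compact set attains its supremum, and its arg-max set is closed (being the preimage of a point under a continuous function), hence compact as a closed subset of a compact set. Combining these, if $X_{k-1}$ is non-empty and compact, then $v_k$ is attained and $X_k = \sigma_k^{-1}(\{v_k\}) \cap X_{k-1}$ is non-empty and compact. Starting from $X_0 = X$, which is non-empty and compact by hypothesis, induction yields that $X_n$ is non-empty.

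To finish, I would fix any $\bx^* \in X_n$ and verify $\bx^* \geq_\sigma \bx$ for every $\bx \in X$. Walking down the chain, either $\bx \in X_k$ for all $k \in [n]$ — in which case $\sigma_j(\bx) = v_j = \sigma_j(\bx^*)$ for every $j$, so $\sigma(\bx) = \sigma(\bx^*)$ and $\bx^* \geq_\sigma \bx$ — or else there is a smallest index $k$ with $\bx \in X_{k-1}$ but $\bx \notin X_k$. At that $k$ we have $\sigma_j(\bx) = v_j = \sigma_j(\bx^*)$ for all $j < k$, while $\sigma_k(\bx) < v_k = \sigma_k(\bx^*)$; this is exactly the condition needed in Definition~\ref{defn:lexopt} to conclude $\bx^* \geq_\sigma \bx$.

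I do not anticipate a genuine obstacle here: the only non-trivial point is the continuity of the order statistics $\sigma_k$, which I would either cite or justify in one line. Everything else is preservation of compactness under closed intersection and a routine unwinding of the lexicographic order definition.
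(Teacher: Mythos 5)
Your proposal is correct and follows essentially the same route as the paper: the same recursive restriction $X_0 \supseteq X_1 \supseteq \cdots \supseteq X_n$ by successively maximizing $\sigma_k$, the same induction that each $X_k$ is non-empty and compact via continuity of the sorting functions (which the paper establishes separately as a Lipschitz bound), and the conclusion that elements of $X_n$ are lexicographic maxima. The only difference is cosmetic: you spell out the final verification that $X_n \subseteq \lexmax X$, which the paper asserts without detail.
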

\begin{proof}
Define $X_0, \ldots, X_n$ and $\Sigma_1, \ldots, \Sigma_n$ recursively as follows: Let $X_0 = X$, $\Sigma_i = \{\sigma_i(\bx) : \bx \in X_{i-1}\}$ and $X_i = \{\bx \in X_{i-1} : \sigma_i(\bx) \ge \sup \Sigma_i\}$. We will prove by induction that each $X_i$ is non-empty and compact, which holds for $X_0$ by assumption. Since $X_n = \lexmax X$ this will complete the proof.

By Theorem \ref{thm:continuitysorting} in
Appendix~\ref{sec:sorting-cont},
each sorting function $\sigma_i$ is continuous. If $X_{i-1}$ is non-empty and compact then $\Sigma_i$ is non-empty and compact, since it is the image of a compact set under a continuous function. Therefore $\sup \Sigma_i \in \Sigma_i$. If $X_{i-1}$ is non-empty and compact and $\sup \Sigma_i \in \Sigma_i$ then $X_i$ is non-empty and closed, since it is the pre-image of a compact set under a continuous function. Also, $X_i$ is bounded, since $X_i \subseteq X_{i-1}$, and therefore $X_i$ is compact.\end{proof}

Furthermore, if $X$ is also convex, we are assured that the lexicographic maximum of $X$ is unique.

\begin{theorem}  \label{thm:lexopt-unique}
If $X \subseteq \bbR^n$ is non-empty, compact and convex then $| \lexmax X | = 1$.\end{theorem}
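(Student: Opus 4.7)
The plan is to argue by contradiction: suppose $\bx^*, \by^* \in \lexmax X$ are two distinct lexicographic maxima. Since each satisfies $\bx^* \geq_\sigma \by^*$ and $\by^* \geq_\sigma \bx^*$, the two vectors must share a common sorted profile, i.e., $\sigma_k(\bx^*) = \sigma_k(\by^*) =: v_k$ for all $k \in [n]$. The natural way to obtain a contradiction is to find a point $\bz \in X$ which still has sorted entries at least $v_1, v_2, \ldots, v_n$ but is closer to ``equalized,'' violating the maximality of $\bx^*$ the moment $\bz \ne \bx^*$. The candidate is the midpoint $\bz := \tfrac{1}{2}(\bx^* + \by^*)$, which lies in $X$ by convexity.

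The engine of the argument is the observation that for each $k \in [n]$ the partial-sum function $S_k(\bx) := \sum_{i=1}^k \sigma_i(\bx)$ admits the representation
\[
S_k(\bx) \;=\; \min_{T \subseteq [n],\, |T|=k}\; \sum_{i \in T} x_i,
\]
a minimum of linear functions and hence concave on $\bbR^n$. (The individual sorting functions $\sigma_k$ are not themselves concave, which is why one must pass to the cumulative sums.) Concavity applied to the midpoint gives
\[
S_k(\bz) \;\ge\; \tfrac{1}{2} S_k(\bx^*) + \tfrac{1}{2} S_k(\by^*) \;=\; v_1 + \cdots + v_k
\qquad \text{for every } k \in [n].
\]

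Now the lex-max property of $\bx^*$ forces $\bx^* \geq_\sigma \bz$. If $\bz \ne \bx^*$, let $j$ be the smallest index at which $\sigma_j(\bz) \ne v_j$; by definition of $\geq_\sigma$ one has $\sigma_j(\bz) < v_j$, while $\sigma_i(\bz) = v_i$ for $i<j$. Summing yields $S_j(\bz) = \sum_{i<j} v_i + \sigma_j(\bz) < \sum_{i\le j} v_i$, contradicting the concavity bound above. Hence $\bz = \bx^*$, which forces $\by^* = 2\bz - \bx^* = \bx^*$, completing the proof.

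The main obstacle is spotting the correct convex-analytic hook: since the $\sigma_k$ are not individually concave, a coordinatewise midpoint argument does not go through, and one must notice that the partial sums from below admit the clean $k$-subset minimum representation that makes them concave. Once this is in hand, everything else is a short chase through the definition of $\geq_\sigma$ together with Theorem~\ref{thm:exists} for existence.
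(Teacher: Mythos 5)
Your overall strategy (concavity of the partial sums $S_k(\bx)=\sum_{i=1}^k\sigma_i(\bx)=\min_{|T|=k}\sum_{i\in T}x_i$ applied to the midpoint) is attractive and genuinely different from the paper's proof, which works coordinatewise with a tie-aware sorting of the two maxima and shows directly that the midpoint strictly improves the $k$th sorted entry. But as written there is a gap at the final step. From $\bx^*\geq_\sigma \bz$ you may only conclude that either the sorted profile of $\bz$ equals $v$, or at the first index where the profiles differ one has $\sigma_j(\bz)<v_j$. Your concavity bound kills the second case, but your sentence ``If $\bz\ne\bx^*$, let $j$ be the smallest index at which $\sigma_j(\bz)\ne v_j$'' silently assumes that $\bz\ne\bx^*$ forces the sorted profiles to differ, which is false: distinct vectors can share a sorted profile. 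Indeed, this is exactly the configuration you are trying to rule out, since your hypothetical $\bx^*$ and $\by^*$ are themselves distinct vectors with identical sorted profiles (permutations of one another), so the possibility that $\bz$ also has profile $v$ while $\bz\ne\bx^*$ cannot be waved away. Moreover, your last equation $\by^*=2\bz-\bx^*=\bx^*$ needs the vector identity $\bz=\bx^*$, not merely equality of sorted profiles, so the argument does not close even if you establish that $\bz$ has profile $v$.

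The gap is fixable with one extra observation. If $\bz$, $\bx^*$, $\by^*$ all have the same sorted profile, they have the same multiset of entries and hence the same Euclidean norm $r$; but by the parallelogram law, $\|\bx^*+\by^*\|^2=4r^2-\|\bx^*-\by^*\|^2<4r^2$ when $\bx^*\ne\by^*$, so $\|\bz\|<r$, a contradiction. With that (or any equivalent strict-convexity argument showing the midpoint of two distinct permutations of a vector cannot again have the same sorted profile), your proof becomes a correct and self-contained alternative to the paper's: the concavity of the $S_k$ handles the ``lexicographically smaller'' case, maximality of $\bx^*$ handles the ``larger'' case, and the norm argument handles the ``equal profile'' case, whereas the paper avoids the issue altogether by choosing the coordinate ordering so that the first coordinate where $\bx^*$ and $\by^*$ differ yields a strict gain in $\sigma_k$ at the midpoint.
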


\begin{proof}
Suppose $X$ is nonempty, compact and convex.
By Theorem~\ref{thm:exists}, $|\lexmax X| \geq 1$, so it
only remains to show that $|\lexmax X| \leq 1$.
Suppose, towards a contradiction, there exist distinct points
$\bx,\by \in X$ that are both lexicographic maxima.
We assume without loss of generality that the coordinates of $\bx$ are
sorted in nondecreasing order and, further, that on any segment of
``ties'' on the sorted $\bx$, the corresponding segment in $\by$ is
nondecreasing.
Thus, for $i,j\in [n]$, if $i\leq j$ then
$x_i\leq x_j$, and in addition,
if $x_i=x_j$ then $y_i \leq y_j$.
Since both $\bx$ and $\by$ are lexicographic maxima, it follows that
$\sigma_i(\by)=\sigma_i(\bx)=x_i$ for $i\in [n]$.

Let $k$ be the smallest index on which $\bx$ and $\by$ differ
(so that $x_i=y_i$ for $i<k$ and $x_k\neq y_k$).
Let $I=[k-1]$.
Then
$\sigma_i(\by)=\sigma_i(\bx)=x_i=y_i$ for
$i\in I$.
Therefore, $\sigma_k(\by)$ is the smallest of the remaining
components of $\by$, implying
\begin{equation}  \label{eq:thm:lexopt-unique:1}
  y_k
  \geq
  \min\{y_k,\ldots,y_n\}
  =
  \sigma_k(\by)
  =
  \sigma_k(\bx)
  =
  x_k,
\end{equation}
and so that $y_k>x_k$. Let $\zz=(\bx+\by)/2$,
which is in $X$ since $X$ is convex.
We consider the components of $\zz$ relative to $x_k$.
Let $i\in [n]$. If $i<k$ then $z_i = x_i \leq x_k$
(since $x_i=y_i$). If $i=k$ then $z_k = (y_k+x_k)/2 > x_k$
since $y_k>x_k$.

Finally, suppose $i>k$, implying $x_i\geq x_k$.
If $x_i>x_k$ then
$z_i=(y_i+x_i)/2 > x_k$ since
$y_i\geq x_k$
(by \eqnref{eq:thm:lexopt-unique:1}).
Otherwise, $x_i=x_k$, implying, by how the components are
sorted, that
$y_i\geq y_k>x_k$;
thus, again,
$z_i=(y_i+x_i)/2>x_k$.

To summarize, $z_i=y_i=x_i\leq x_k$ if $i\in I$,
and $z_i>x_k$ if $i\not\in I$.
It follows that
$\sigma_i(\zz)=\sigma_i(\bx)=x_i$ for $i=1,\ldots,k-1$,
and that
$\sigma_k(\zz)=\min\{z_k,\ldots,z_n\}>x_k=\sigma_k(\bx)$.
However, this contradicts that $\bx$ is a lexicographic maximum.
\end{proof}

\section{Computing a lexicographic maximum}

Algorithm \ref{alg:iterative} below is an iterative procedure for computing a lexicographic maximum of a set $X \subseteq \bbR^n$. In each iteration $k$, Algorithm~\ref{alg:iterative} finds a vector in $X$ with the (approximately) largest $k$th smallest component, subject to the constraint that its $k-1$ smallest components are at least as large as the $k-1$ smallest components of the vector from the previous iteration. The quality of the approximation is governed by a tolerance parameter $\eps$. Since the optimization problem in each iteration can have multiple solutions, the output of Algorithm~\ref{alg:iterative} should be thought of as being selected arbitrarily from a set of possible outputs.
We write $\algout{\eps}$ for the set of all possible outputs when
Algorithm~\ref{alg:iterative} is run on input $(X,\eps)$.

Algorithm~\ref{alg:iterative} and close variants have been described many times in the literature. In general, the optimization problem in each iteration can be difficult to solve, especially due to the presence of sorting functions in the constraints. Consequently, much previous work has focused on tractable special cases where the optimization problem can be reformulated as an equivalent linear or convex program \citep{luss1999equitable, miltersen2006computing}. Other authors have used Algorithm \ref{alg:iterative} with tolerance parameter $\eps = 0$ as the definition of a lexicographic maximum \citep{van1991stability,Nacson2019, diana2021lexicographically}. Theorem~\ref{thm:equiv} explains the relationship between Definition~\ref{defn:lexopt} and Algorithm~\ref{alg:iterative} when $\eps = 0$.

\RestyleAlgo{ruled}
\SetKw{KwRet}{Return:}
\begin{algorithm}\caption{Compute a lexicographic maximum. \label{alg:iterative}}
\KwIn{Set $X \subseteq \bbR^n$, tolerance $\eps \ge 0$.}
\For{$k = 1, \ldots, n$}{ Find an $\eps$-optimal solution $\bx^{(k)}$ to the optimization problem: \begin{align*}
    \max_{\bx \in X} &~\sigma_k(\bx)\\
    \textrm{subject to} &~ \sigma_i(\bx) \ge \sigma_i(\bx^{(k-1)})\textrm{ for all }i \in [k-1]
\end{align*}
}
\KwRet{$\bx^{(n)}$.}
\end{algorithm}

\begin{theorem} \label{thm:equiv}
Let $X \subseteq \bbR^n$. A vector $\hbx \in \bbR^n$ is a possible output of Algorithm \ref{alg:iterative} on input $(X, 0)$ if and only if $\hbx \in \lexmax X$.
That is, $\algout{0}=\lexmax X$.
\end{theorem}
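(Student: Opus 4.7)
The plan is to prove the two set containments $\mathcal{A}(X,0)\subseteq\lexmax X$ and $\lexmax X\subseteq\mathcal{A}(X,0)$ separately, each by a short, direct argument from the definition of $\geq_\sigma$ and the way Algorithm~\ref{alg:iterative} enforces the lex-constraints. A key auxiliary observation I would record first is a \emph{monotonicity lemma}: for any run $\bx^{(1)},\ldots,\bx^{(n)}$ of the algorithm with $\eps=0$, and for any $i\le k$,
\[
\sigma_i(\bx^{(k)}) \ge \sigma_i(\bx^{(i)}).
\]
This is immediate by telescoping the algorithm's constraint $\sigma_i(\bx^{(j)}) \ge \sigma_i(\bx^{(j-1)})$ across $j=i+1,\ldots,k$. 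In particular, taking $k=n$ gives $\sigma_i(\hbx)\ge\sigma_i(\bx^{(i)})$ for every $i\in[n]$.

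For $\mathcal{A}(X,0)\subseteq\lexmax X$: fix $\hbx=\bx^{(n)}\in\mathcal{A}(X,0)$ and suppose for contradiction that $\hbx\notin\lexmax X$, witnessed by some $\bx'\in X$ with $\bx'>_\sigma\hbx$. Let $j$ be the smallest index with $\sigma_j(\bx')\ne\sigma_j(\hbx)$, so $\sigma_j(\bx')>\sigma_j(\hbx)$ and $\sigma_i(\bx')=\sigma_i(\hbx)$ for $i<j$. The monotonicity lemma gives $\sigma_i(\hbx)\ge\sigma_i(\bx^{(j-1)})$ for $i\le j-1$, so $\sigma_i(\bx')\ge\sigma_i(\bx^{(j-1)})$ for $i<j$, meaning $\bx'$ is feasible in iteration $j$. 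But then exact optimality of $\bx^{(j)}$ forces $\sigma_j(\bx^{(j)})\ge\sigma_j(\bx')$, whereas monotonicity gives $\sigma_j(\hbx)\ge\sigma_j(\bx^{(j)})\ge\sigma_j(\bx')>\sigma_j(\hbx)$, a contradiction.

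For $\lexmax X\subseteq\mathcal{A}(X,0)$: given $\bx^*\in\lexmax X$, I would exhibit a valid run of the algorithm producing $\bx^*$ by choosing $\bx^{(k)}=\bx^*$ for every $k$. At iteration $k$, feasibility is trivial. For optimality, suppose some feasible $\bx\in X$ had $\sigma_k(\bx)>\sigma_k(\bx^*)$; let $i^*\le k$ be the smallest index with $\sigma_{i^*}(\bx)\ne\sigma_{i^*}(\bx^*)$. By feasibility, $\sigma_i(\bx)\ge\sigma_i(\bx^*)$ for $i<k$, so in both cases $i^*<k$ and $i^*=k$ we get $\sigma_{i^*}(\bx)>\sigma_{i^*}(\bx^*)$, whence $\bx>_\sigma\bx^*$, contradicting $\bx^*\in\lexmax X$.

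I do not expect any real obstacle; the entire argument is bookkeeping, and the monotonicity lemma is the only place care is needed because it is what lets us translate the per-iteration constraint (which only compares consecutive iterates) into a global bound that can be pitted against a hypothetical competitor $\bx'$ at iteration $j$.
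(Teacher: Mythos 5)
Your proof is correct, and it is organized differently from the paper's. The paper fixes a lexicographic maximum $\bx^*$ (implicitly assuming one exists) and proves by a single induction that every iterate satisfies $\sigma_i(\bx^{(k)})=\sigma_i(\bx^*)$ for $i\in[k]$; taking $k=n$ this simultaneously shows that any possible output has the sorted components of $\bx^*$ (hence lies in $\lexmax X$) and that $\bx^*$ itself remains feasible and optimal at every round (hence is a possible output). You instead prove the two containments separately: for $\algout{0}\subseteq\lexmax X$ you use a telescoping monotonicity lemma ($\sigma_i(\bx^{(k)})$ is nondecreasing in $k$ for $k\ge i$) and derive a contradiction against an arbitrary competitor $\bx'$, with no reference to any element of $\lexmax X$; for the reverse containment you exhibit the constant run $\bx^{(k)}=\bx^*$, which is essentially the argument implicit in the paper. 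What your route buys is that the forward containment does not presuppose $\lexmax X\neq\emptyset$ (if $\lexmax X=\emptyset$ your argument shows $\algout{0}=\emptyset$ as well, since any exact run would produce a lexicographic maximum), and your monotonicity lemma is the same bookkeeping the paper later redoes for general tolerance in Proposition~\ref{pr:poss-output-simplify}; what the paper's induction buys is brevity and the slightly stronger invariant that every intermediate iterate already agrees with $\bx^*$ on its first $k$ sorted components. One small point of hygiene: you state the lemma as $\sigma_i(\bx^{(k)})\ge\sigma_i(\bx^{(i)})$ but then invoke it in the form $\sigma_i(\bx^{(n)})\ge\sigma_i(\bx^{(j-1)})$ for $i\le j-1$; the telescoped chain you describe does give this stronger form, but you should state the lemma that way ($\sigma_i(\bx^{(k)})\ge\sigma_i(\bx^{(m)})$ for $i\le m\le k$) so that the application at iteration $j$ is literally covered.
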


\begin{proof}
Let $\bx^* \in \lexmax X$. We prove by induction that in each iteration $k$ of Algorithm \ref{alg:iterative} we have $\sigma_i(\bx^{(k)}) = \sigma_i(\bx^*)$ for all $i \in [k]$. Setting $k = n$ proves the theorem. For the base case $k = 1$, observe that the algorithm assigns $\bx^{(1)} \in \arg \max_{\bx \in X} \sigma_1(\bx)$. Therefore $\sigma_1(\bx^{(1)}) = \sigma_1(\bx^*)$, by the definition of $\bx^*$. In each iteration $k > 1$ the algorithm assigns 
\begin{align*}
    \bx^{(k)} \in  & \arg \max_{\bx \in X} \sigma_k(\bx) \\
    & \textrm{ subject to } \sigma_i(\bx) \ge \sigma_i(\bx^*) \textrm{ for all } i \in [k - 1], 
\end{align*}
where the constraints are implied by the inductive hypothesis. Therefore $\sigma_i(\bx^{(k)}) = \sigma_i(\bx^*)$ for all $i \in [k]$, again by the definition of $\bx^*$.\end{proof}

While Algorithm \ref{alg:iterative} can only find a lexicographic maximum if one exists in $X$, we recall from Theorem \ref{thm:exists} that this holds whenever $X$ is non-empty and compact.

\cite{diana2021lexicographically} and \cite{henzinger2022leximax} proposed $\algout{\eps}$ (or minor variants thereof) as the definition of the $\eps$-approximate lexicographic maxima of $X$. However, they observed that $\algout{\eps}$ may nonetheless contain vectors that are far from any lexicographic maximum, even if $\eps$ is very small (but still non-zero). In the next section we formally characterize this phenomenon, and in the rest of the paper we explore its implications.

\section{Lexicographic stability}

Theorem~\ref{thm:equiv} states that Algorithm~\ref{alg:iterative} outputs a lexicographic maximum (assuming one exists) if the optimization problem in each iteration of the algorithm is solved exactly.  In practice, the optimization problems will be solved by a numerical method up to some tolerance $\eps > 0$, with smaller values of $\eps$ typically requiring longer running times. Ideally, we would like the quality of the output of Algorithm~\ref{alg:iterative} to vary smoothly with $\eps$, and if this happens for a set $X$ then we say that $X$ is \emph{lexicographically stable}.

\begin{definition}
\label{defn:stable} A set $X \subseteq \bbR^n$ is \emph{lexicographically stable} if for all $\delta > 0$ there exists $\eps > 0$ such that for all $\hbx \in \algout{\eps}$ and $\bx^* \in \lexmax X$ we have $d\barpair{\bx^*}{\hbx} < \delta$.
\end{definition}

Definition~\ref{defn:stable} says that if a set $X$ is lexicographically stable, then in order to find an arbitrarily good approximation of its lexicographic maximum, it suffices to run Algorithm~\ref{alg:iterative} with a sufficiently small tolerance parameter $\eps > 0$. On the other hand, if $X$ is not lexicographically stable, then no matter how small we make $\eps > 0$ the output of Algorithm~\ref{alg:iterative} can be far from a lexicographic maximum.

A sufficient (but not necessary) condition for a set $X \subseteq \bbR^n$ to be lexicographically stable is that $X$ is a convex polytope ({i.e.}, the convex hull of finitely many points in $\bbR^n$). 

\begin{theorem} \label{thm:convex} If $X \subseteq \bbR^n$ is a convex polytope then $X$ is lexicographically stable. \end{theorem}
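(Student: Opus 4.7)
The plan is a proof by contradiction using compactness and parametric LP continuity, leveraging Theorem~\ref{thm:equiv} (which identifies $\algout{0}$ with $\lexmax X$) and Theorem~\ref{thm:lexopt-unique} (uniqueness of $\bx^*$ for compact convex $X$). Assume $X$ is a polytope but not lexicographically stable, so there exist $\delta > 0$, a sequence $\eps_m \goes 0^+$, and outputs $\hat{\bx}_m \in \algout{\eps_m}$ with $d\barpair{\bx^*}{\hat{\bx}_m} \ge \delta$. Each $\hat{\bx}_m = \bx^{(n)}_m$ arises from intermediate iterates $\bx^{(1)}_m, \ldots, \bx^{(n)}_m$ of Algorithm~\ref{alg:iterative}; by compactness of $X$ and passing to a common subsequence, I assume $\bx^{(k)}_m \goes \bx^{(k)}_\star \in X$ for every $k$. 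The goal is to show $\bx^{(n)}_\star \in \algout{0} = \{\bx^*\}$, yielding a contradiction via continuity of $d\barpair{\bx^*}{\cdot}$.

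The key inductive step is that, for each $k$, the limit $\bx^{(k)}_\star$ must be an \emph{exact} optimizer of the iteration-$k$ problem given $\bx^{(k-1)}_\star$. Define
\[
\phi_k(\mathbf{t}) := \sup\{\sigma_k(\bx) : \bx \in X,\; \sigma_i(\bx) \ge t_i \text{ for } i \in [k-1]\},
\]
and set $\mathbf{t}_m := (\sigma_1(\bx^{(k-1)}_m), \ldots, \sigma_{k-1}(\bx^{(k-1)}_m))$. The $\eps_m$-optimality of $\bx^{(k)}_m$ gives $\sigma_k(\bx^{(k)}_m) \ge \phi_k(\mathbf{t}_m) - \eps_m$, and feasibility passes to the limit (via continuity of $\sigma_i$ from Theorem~\ref{thm:continuitysorting}) to yield $\sigma_i(\bx^{(k)}_\star) \ge (\mathbf{t}_\star)_i$. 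Assuming $\phi_k$ is continuous at $\mathbf{t}_\star$, letting $m \goes \infty$ produces $\sigma_k(\bx^{(k)}_\star) \ge \phi_k(\mathbf{t}_\star)$; combined with the feasibility inequality, this forces exact optimality.

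The main obstacle is establishing continuity of $\phi_k$ for polytopes. The plan is to decompose according to sort order: for each permutation $\pi$ of $[n]$, let $X_\pi := \{\bx \in X : x_{\pi(1)} \le \cdots \le x_{\pi(n)}\}$, which is itself a polytope (the intersection of $X$ with a polyhedral sorting cone). On $X_\pi$, the sorting functions become linear, $\sigma_i(\bx) = x_{\pi(i)}$, so the restricted problem is an ordinary LP with value
\[
V_\pi(\mathbf{t}) := \sup\{x_{\pi(k)} : \bx \in X_\pi,\; x_{\pi(i)} \ge t_i \text{ for } i < k\},
\]
and $\phi_k(\mathbf{t}) = \max_\pi V_\pi(\mathbf{t})$. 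Each $V_\pi$ is piecewise-linear and continuous on its polyhedral domain of feasibility by standard parametric LP theory (e.g.\ via Hoffman's bound). Lower semi-continuity of $\phi_k$ at $\mathbf{t}_\star$ follows because some $\pi_\star$ achieves the max and is feasible at $\mathbf{t}_\star$ (witnessed by $\bx^{(k)}_\star \in X_{\pi_\star}$), giving $V_{\pi_\star}(\mathbf{t}_m) \goes V_{\pi_\star}(\mathbf{t}_\star) = \phi_k(\mathbf{t}_\star)$. Upper semi-continuity follows from compactness: any $\pi$ with $\limsup_m V_\pi(\mathbf{t}_m)$ finite must actually be feasible at $\mathbf{t}_\star$ (the optimizing $\bx^{(m)} \in X_\pi$ subconverge to a feasible limit), and the $\limsup$ is bounded by $V_\pi(\mathbf{t}_\star) \le \phi_k(\mathbf{t}_\star)$.

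Inducting on $k$, the tuple $(\bx^{(1)}_\star, \ldots, \bx^{(n)}_\star)$ is a valid exact-algorithm run, so Theorems~\ref{thm:equiv} and~\ref{thm:lexopt-unique} force $\bx^{(n)}_\star = \bx^*$. Continuity of each $\sigma_k$ then gives $d\barpair{\bx^*}{\hat{\bx}_m} \goes 0$, contradicting $d\barpair{\bx^*}{\hat{\bx}_m} \ge \delta$ for all $m$.
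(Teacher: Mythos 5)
Your overall skeleton (pass to a limit of the intermediate iterates and argue that the limiting tuple is an exact run of Algorithm~\ref{alg:iterative}, then invoke Theorem~\ref{thm:equiv} and uniqueness) is a reasonable reorganization, but the step carrying all the weight --- lower semicontinuity of the value function $\phi_k$ at $\mathbf{t}_\star$ along the sequence $\mathbf{t}_m$ --- is exactly the hard part of the theorem, and your justification of it has a genuine hole. You argue: pick a permutation $\pi_\star$ attaining $\phi_k(\mathbf{t}_\star)$ and conclude $V_{\pi_\star}(\mathbf{t}_m)\to V_{\pi_\star}(\mathbf{t}_\star)$. That conclusion presupposes that $\mathbf{t}_m$ lies in the effective domain of $V_{\pi_\star}$, i.e.\ that the LP associated with $\pi_\star$ remains \emph{feasible} at the perturbed thresholds. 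Nothing guarantees this: you only know $\mathbf{t}_m\in\bigcup_\pi \dom V_\pi$ (witnessed by $\bx^{(k)}_m$, whose sort pattern $\pi_m$ need not be $\pi_\star$), and since the constraints are $\sigma_i(\cdot)\ge t_i$ with $\mathbf{t}_m$ possibly exceeding $\mathbf{t}_\star$ in some coordinates, the sequence can approach $\mathbf{t}_\star$ from outside $\dom V_{\pi_\star}$, where $V_{\pi_\star}=-\infty$. A pointwise maximum of finitely many functions, each continuous relative to its own polyhedral domain, is in general \emph{not} lower semicontinuous where those domains meet, so "standard parametric LP theory plus take the max over $\pi$" does not close the argument. (A smaller slip in the same sentence: the permutation attaining $\phi_k(\mathbf{t}_\star)$ is the sort pattern of an optimizer of the iteration-$k$ problem at $\mathbf{t}_\star$, not of $\bx^{(k)}_\star$, which is merely feasible.) Note also that this is not a removable technicality: the paper's compact convex example in \eqnref{eqn:compact-X-not-lex-stab} shows the analogous value function really does fail lower semicontinuity on its domain (its value drops from $1$ to $3/4$ under vanishing upward perturbations of the thresholds), so any correct proof must exploit polyhedrality precisely at this point, and a naive convexity patch (e.g.\ averaging an optimizer at $\mathbf{t}_\star$ with a feasible point for $\mathbf{t}_m$) fails because $\sigma_i$ for $i\ge 2$ is not concave.

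For comparison, the paper's proof spends its entire technical effort on this very issue: it replaces the sorted-constraint value function $\Hk$ by the surrogate $\GI$ with a \emph{fixed} index set constrained by equalities (\eqnref{eqn:gi-defn}), which is concave and --- because a polytope is locally simplicial --- lower semicontinuous relative to $X$ (Lemma~\ref{lem:gi-conc-lsc}, via Rockafellar); it bridges back to the sorted problem with $\Hk\ge\GI$ (Proposition~\ref{pr:hk-le-gi}); and it obtains the needed strict inequality at the limit point via a midpoint/convexity argument (Claim~2 in Appendix~\ref{sec:convex-polytop-stable}), rather than asserting continuity of the sorted value function itself. To repair your proof you would need an argument of comparable substance establishing your continuity claim at $\mathbf{t}_\star$ (or a surrogate for it); as written, the claim is assumed where it matters.
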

\begin{proof}[Proof sketch]
We outline the main steps of the proof.
Detailed justification for these steps is given in
Appendix~\ref{sec:convex-polytop-stable}.
We suppose $X$ is a convex polytope and that $\xopt$ is its unique
lexicographic maximum.

On each round~$k$, Algorithm~\ref{alg:iterative} approximately solves
an optimization problem whose value is given by some function
$\Hk(\xs{k-1})$ so that $\xs{k}$ must satisfy
$\sigma_k(\xs{k})\geq \Hk(\xs{k-1}) - \eps$.
In fact, we show that a point $\xx$ is a possible output on $(X,\eps)$
if and only if it satisfies
$\sigma_k(\xx)\geq \Hk(\xx) - \eps$
for all $k\in [n]$.
This function $\Hk$ can be lower-bounded by another function $\GI$,
which, if $X$ is a convex polytope, is concave and lower
semicontinuous.

If $X$ is not lexicographically stable, then there exists sequences
$\xx_t$ and $\eps_t>0$ with $\eps_t\rightarrow 0$ and
$\xx_t\in\algout{\eps_t}$ such that $\xx_t\rightarrow\xhat$ for some
$\xhat\in X$ but $\xhat\neq\xopt$.
We can re-index points in $X$ so that the components of $\xhat$ are
sorted, and furthermore, that when there are ties, they are
additionally sorted according to the components of $\xopt$.
Let $k$ be the first component for which $\hatx_k\neq\xopti_k$.
We show this implies $\GI(\xhat) > \hatx_k$.
Combining facts, we then have
\begin{align*}
  \hatx_k
  <\; &
  \GI(\xhat)
  \leq
  \liminf_{t\rightarrow\infty} \GI(\xx_t)
  \\
  &\leq
  \liminf_{t\rightarrow\infty} \Hk(\xx_t)
  \leq
  \liminf_{t\rightarrow\infty} [\sigma_k(\xx_t) + \eps_t]
  =
  \hatx_k,
\end{align*}
a contradiction.
\end{proof}

Combining Theorem~\ref{thm:convex} with Theorem~\ref{thm:main} below recovers results due to \cite{MoWalrand98}, \cite{LeBoudec2000} and \cite{Rosset2004} which show that an exponential loss minimizer in $X$ converges to a lexicographic maximum of $X$ when $X$ is a convex polytope.

Fairly simple non-convex sets that contain a lexicographic maximum but are not lexicographically stable exist. We provide an example in Theorem~\ref{thm:sharp_lower},
one that also shows how, in such cases, minimizing
exponential loss might not lead to a lexicographic maximum.

Theorem~\ref{thm:convex} shows that a nonempty set $X$ is
lexicographically stable if it is a convex polytope, a condition that
implies that $X$ is also convex and compact.
The theorem still holds, by the same proof, with the weaker requirement
that $X$ is convex, compact and \emph{locally simplicial}, a property
defined in \citet[page~84]{ROC}.
However, the theorem is false, in general, if we only require that $X$
be convex and compact.

As an example, in $\R^3$, let $Z=[-1,0]\times[0,1]\times[0,1]$, and
let
\begin{equation}   \label{eqn:compact-X-not-lex-stab}
  X
  =
  \braces{ \xx\in Z :
            x_1 (x_3 - 1) \geq x_2^2
         }.
\end{equation}
This set is compact and convex, and
has a unique lexicographic maximum, namely,
$\xopt=\transvec{0,0,1}$.
For $\eps\in (0,1)$, let $\xeps=\transvec{-\eps^2,\eps/2,3/4}$.
It can be shown that $\xeps\in\algout{\eps}$.
It follows that $X$ is not lexicographically stable since
$\sigma_3(\xeps)=3/4$ for all $\eps>0$ while
$\sigma_3(\xopt)=1$.
(Details are given in
Appendix~\ref{sec:convex-not-stable}.)

\section{Convergence analysis of exponential loss minimization}
\label{sec:convergence}

In this section we study conditions under which a near or exact minimizer $\bx_{c, \gamma} \in X$ of the exponential loss $L_c(\bx)$ converges to a lexicographic maximum $\bx^* \in \lexmax X$ as $c \goes \infty$. To see why convergence should be expected, note that when $c$ is large, the dominant term of $L_c(\bx)$ corresponds to the smallest component of $\bx$, since the function $x \mapsto \exp(-cx)$ decreases very quickly. Therefore minimizing $L_c(\bx)$ will tend to make this term as large as possible. Further, among vectors $\bx$ that maximize their smallest component, the dominant term in $L_c(\bx)$ corresponds to the second smallest component of $\bx$, if we ignore terms that are equal for all vectors. In general, when $c$ is large, the magnitudes of the terms in $L_c(\bx)$ decrease sharply when they are sorted in increasing order of the components of $\bx$, and this situation tends to favor a minimizer of $L_c(\bx)$ that is also a lexicographic maximum.
Although such reasoning is intuitive, proving convergence to a
lexicographic maximum can be challenging;
indeed, convergence need not hold for every set $X$,
as will be seen shortly.

\subsection{Asymptotic results}

Theorem~\ref{thm:main} states our main convergence result: If $X$ is lexicographically stable then $\bx_{c, \gamma}$ converges to a lexicographic maximum $\bx^*$ as $c \goes \infty$, provided that $\gamma \in [0, 1)$. In other words, in the contrapositive, if a near minimizer in $X$ of $L_c(\bx)$ fails to converge to a lexicographic maximum as $c \goes \infty$, then Algorithm~\ref{alg:iterative} can fail to find a good approximation of a lexicographic maximum for any tolerance $\eps > 0$.

\begin{theorem} \label{thm:main} Let $X \subseteq \bbR^n$ and $\gamma \in [0, 1)$. If $X$ is lexicographically stable then for all $\bx^* \in \lexmax X$
\[
\lim_{c \goes \infty} d\barpair{\bx^*}{\bx_{c, \gamma}} = 0.
\]
\end{theorem}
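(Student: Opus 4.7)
The plan is to leverage lexicographic stability as a reduction: if I can show that $\bx_{c, \gamma} \in \algout{\eps}$ for all sufficiently large $c$, then closeness to $\bx^*$ is immediate from Definition~\ref{defn:stable}. So first I would fix $\delta > 0$, invoke stability to obtain $\eps > 0$ with the guarantee that every $\hbx \in \algout{\eps}$ satisfies $d\barpair{\bx^*}{\hbx} < \delta$, and then aim to prove $\bx_{c, \gamma} \in \algout{\eps}$ for all $c$ above a threshold depending on $\eps$, $\gamma$, and $n$.

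To certify membership in $\algout{\eps}$, I will exhibit a legal trajectory for Algorithm~\ref{alg:iterative} in its simplest form---the constant one, $\bx^{(1)} = \cdots = \bx^{(n)} = \bx_{c, \gamma}$. Feasibility is automatic at every step, and $\eps$-optimality at step $k$ reduces to the assertion that there is no $\bx' \in X$ with $\sigma_i(\bx') \geq \sigma_i(\bx_{c, \gamma})$ for all $i < k$ and $\sigma_k(\bx') > \sigma_k(\bx_{c, \gamma}) + \eps$. I will suppose for contradiction that such a witness $\bx'$ exists for some $k$.

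The contradiction comes from comparing the exponential losses. Writing $L_c(\bx) = \sum_i \exp(-c \sigma_i(\bx))$, the constraints for $i < k$ give $\exp(-c \sigma_i(\bx')) \leq \exp(-c \sigma_i(\bx_{c, \gamma}))$ termwise, while for $i \geq k$, monotonicity of sorted values yields $\sigma_i(\bx') \geq \sigma_k(\bx') > \sigma_k(\bx_{c, \gamma}) + \eps$ and hence $\exp(-c \sigma_i(\bx')) < \exp(-c \eps) \exp(-c \sigma_k(\bx_{c, \gamma}))$. Retaining only the $k$-th term on the $\bx_{c, \gamma}$ side, one obtains
\[
L_c(\bx_{c, \gamma}) - L_c(\bx') > \exp(-c \sigma_k(\bx_{c, \gamma})) \bigl[1 - (n - k + 1) \exp(-c \eps)\bigr].
\]
This is where $\gamma < 1$ pays off: choosing $c > \frac{1}{\eps} \log \frac{n}{1 - \gamma}$ makes the bracket exceed $\gamma$, and since $\sigma_k(\bx_{c, \gamma}) \leq \norm{X}_\infty$, I conclude $L_c(\bx_{c, \gamma}) - L_c(\bx') > \gamma \exp(-c \norm{X}_\infty)$, contradicting the defining near-minimizer inequality for $\bx_{c,\gamma}$.

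The main obstacle I anticipate is handling the tail of up to $n - k + 1$ terms on the $\bx'$ side, each of size $\exp(-c \eps) \exp(-c \sigma_k(\bx_{c, \gamma}))$: these must be dominated by the single term $\exp(-c \sigma_k(\bx_{c, \gamma}))$, and the multiplicative slack $1 - \gamma$ is precisely what makes this possible. This is why the hypothesis $\gamma < 1$ appears in the statement; the threshold on $c$ blows up as $\gamma \to 1$, which strongly suggests the restriction cannot be relaxed without substantially changing the argument.
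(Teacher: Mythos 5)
Your proposal is correct and is essentially the paper's own argument: your contradiction step (comparing $L_c(\bx_{c,\gamma})$ with a hypothetical witness $\bx'$ and using $\sigma_k(\bx_{c,\gamma})\le\norm{X}_\infty$ together with $\gamma<1$) is exactly the content of the paper's Lemma~\ref{thm:helper}, and your certification of $\bx_{c,\gamma}\in\algout{\eps}$ via the constant trajectory is the paper's notion of an $(X,\eps(c))$-valid point, with $\eps(c)=\frac1c\log\frac{n}{1-\gamma}$. The only difference is packaging (lemma proved inline by contradiction, and fixing $\delta$ then choosing $c$ large, rather than the paper's sequence $\delta_t\to 0$), which does not change the substance.
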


Theorem~\ref{thm:main}'s requirement that $\gamma \in [0, 1)$ ensures that $\bx_{c, \gamma}$ is less than $\exp(-c\norm{X}_\infty)$ from the minimum value of $L_c(\bx)$. To see why this condition aids convergence to a lexicographic maximum, consider that the smallest possible value of any term in $L_c(\bx)$ is $\exp(-c\norm{X}_\infty)$. So if the minimization error were larger than this value then $\bx_{c, \gamma}$ may not make every term of $L_c(\bx_{c, \gamma})$ small, which in turn could cause $\bx_{c, \gamma}$ to be far from a lexicographic maximum.

Before proving Theorem~\ref{thm:main}, we introduce some additional notation and a key lemma. For any $X \subseteq \bbR^n$, $\bx \in \bbR^n$ and $k \in \{0\} \cup [n]$ let
\begin{equation}
X_k(\bx) = \{\bx' \in X : \sigma_i(\bx') \ge \sigma_i(\bx) \textrm{ for all } i \in [k]\} \label{eq:constraints}
\end{equation}
be the set of all vectors in $X$ whose $k$ smallest components are at least as large as the $k$ smallest components of $\bx$. Note that $X_0(\bx) = X$. Also, if $\bx^{(k-1)}$ is the vector selected in iteration $k-1$ of Algorithm~\ref{alg:iterative}, then $X_{k-1}(\bx^{(k-1)})$ is the set of feasible solutions to the optimization problem in iteration $k$ of the algorithm. Lemma~\ref{thm:helper} below, which is key to our convergence results and proved in Appendix~\ref{sec:lem-helper}, states that if $\bx_{c, \gamma}$ is selected in any iteration of Algorithm~\ref{alg:iterative} then it is a good solution for the next iteration when $c$ is large.

\begin{lemma} \label{thm:helper} For all $X \subseteq \bbR^n, \gamma \in [0, 1), c > 0$ and $k \in [n]$
\[
\sigma_k(\bx_{c,\gamma}) \ge \sup_{\bx \in X_{k-1}(\bx_{c, \gamma})} \sigma_k(\bx) - \frac1c \log\left(\frac{n - k + 1}{1 - \gamma}\right).
\]
\end{lemma}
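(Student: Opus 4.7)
The plan is to bound $L_c(\bx_{c,\gamma})$ against $L_c(\bx)$ for an arbitrary competitor $\bx \in X_{k-1}(\bx_{c,\gamma})$, and then take the supremum over such $\bx$ at the end. The crucial observation is that the exponential loss is invariant under sorting, so we can rewrite
\[
L_c(\bz) \;=\; \sum_{i=1}^n \exp(-c \sigma_i(\bz)) \qquad \text{for any } \bz \in \bbR^n.
\]
The near-minimality of $\bx_{c,\gamma}$ then gives
\[
\sum_{i=1}^{n} \exp(-c\sigma_i(\bx_{c,\gamma})) \;\le\; \sum_{i=1}^{n} \exp(-c\sigma_i(\bx)) \;+\; \gamma \exp(-c\norm{X}_\infty).
\]

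Next, because $\bx \in X_{k-1}(\bx_{c,\gamma})$ satisfies $\sigma_i(\bx) \ge \sigma_i(\bx_{c,\gamma})$ for $i \in [k-1]$, the first $k-1$ terms on the right are each no larger than the corresponding terms on the left and may be cancelled. Lower-bounding the remaining left-hand side by the single term $\exp(-c\sigma_k(\bx_{c,\gamma}))$, and upper-bounding the remaining sum on the right by $(n-k+1)\exp(-c\sigma_k(\bx))$ (using $\sigma_i(\bx) \ge \sigma_k(\bx)$ for $i \ge k$), yields
\[
\exp(-c\sigma_k(\bx_{c,\gamma})) \;\le\; (n-k+1)\exp(-c\sigma_k(\bx)) \;+\; \gamma \exp(-c\norm{X}_\infty).
\]

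The key trick, and the only non-routine step, is to absorb the slack term $\gamma\exp(-c\norm{X}_\infty)$ back into the left-hand side rather than leave it as an additive tail on the right. This is possible because $\sigma_k(\bx_{c,\gamma}) \le \norm{X}_\infty$ (as $\bx_{c,\gamma} \in X$), so $\exp(-c\norm{X}_\infty) \le \exp(-c\sigma_k(\bx_{c,\gamma}))$. Substituting and collecting gives
\[
(1-\gamma)\exp(-c\sigma_k(\bx_{c,\gamma})) \;\le\; (n-k+1)\exp(-c\sigma_k(\bx)).
\]
Since $\gamma < 1$, we may take logarithms and divide by $-c$ to obtain
\[
\sigma_k(\bx_{c,\gamma}) \;\ge\; \sigma_k(\bx) \;-\; \frac{1}{c}\log\!\left(\frac{n-k+1}{1-\gamma}\right).
\]
Taking the supremum over $\bx \in X_{k-1}(\bx_{c,\gamma})$ completes the argument.

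The main subtlety is precisely that $1/(1-\gamma)$ factor: a naive manipulation that leaves $\gamma\exp(-c\norm{X}_\infty)$ on the right only yields an additive $\log(n-k+1+\gamma)$ bound, which would not match the statement. Absorbing the slack into the left-hand side via the bound $\exp(-c\norm{X}_\infty) \le \exp(-c\sigma_k(\bx_{c,\gamma}))$ is what retroactively motivates the somewhat unusual $\gamma\exp(-c\norm{X}_\infty)$ scaling used in the definition of $\bx_{c,\gamma}$. Everything else is direct manipulation of sorted sums and the sign of $-c$.
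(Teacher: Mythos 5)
Your proof is correct and follows essentially the same route as the paper's: use near-minimality against an arbitrary $\bx \in X_{k-1}(\bx_{c,\gamma})$, drop the first $k-1$ terms via the constraint $\sigma_i(\bx)\ge\sigma_i(\bx_{c,\gamma})$, bound the remaining sum by $(n-k+1)\exp(-c\sigma_k(\bx))$ against the single term $\exp(-c\sigma_k(\bx_{c,\gamma}))$, and absorb the slack using $\exp(-c\norm{X}_\infty)\le\exp(-c\sigma_k(\bx_{c,\gamma}))$. The only difference is cosmetic (you cancel terms and factor out $1-\gamma$ where the paper divides through by $\exp(-c\sigma_k(\bx_{c,\gamma}))$), so no further comment is needed.
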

We are now ready to prove Theorem~\ref{thm:main}.

\begin{proof}[Proof of Theorem~\ref{thm:main}] Say that a vector $\bx \in \bbR^n$ is \emph{$(X, \eps)$-valid} if it is a solution to all of the optimization problems in Algorithm~\ref{alg:iterative} when run on input $(X, \eps)$. In other words, if Algorithm~\ref{alg:iterative} is run on input $(X, \eps)$, then the algorithm can let $\bx^{(1)} = \bx, \bx^{(2)} = \bx, \ldots, \bx^{(n)} = \bx$. For all $c > 0$ let $\eps(c) = \frac{1}{c} \log \frac{n}{1 - \gamma}$. By Lemma~\ref{thm:helper} we have
\[
\sigma_k(\bx_{c, \gamma}) \ge \sup_{\bx \in X_{k-1}(\bx_{c, \gamma})} \sigma_k(\bx) - \eps(c)
\]
for all $k \in [n]$, which immediately implies that $\bx_{c, \gamma}$ is $(X, \eps(c))$-valid. Let $\{\delta_t\}$ be a positive sequence with $\lim_{t \goes \infty} \delta_t = 0$. Since $\eps(c)$ is a continuous function with range $(0, \infty)$, by Definition~\ref{defn:stable} for each $\delta_t$ there exists $c_t > 0$ such that for all $\hbx \in \algout{\eps(c_t)}$ and $k \in [n]$ we have
\[
\delta_t \ge \sigma_k(\bx^*) - \sigma_k(\hbx).
\]
Also, since $\algout{\eps} \subseteq \algout{\eps'}$ if $\eps < \eps'$, and $\eps(c)$ is a decreasing function, we can arrange $\{c_t\}$ to be an increasing sequence with $\lim_{t \goes \infty} c_t = \infty$.

Since $\bx_{c, \gamma}$ is $(X, \eps(c))$-valid we have for all $k \in [n]$
\[
\delta_t \ge \sigma_k(\bx^*) - \sigma_k(\bx_{c_t, \gamma}).
\]
By taking the limit superior of both sides we have for all $k \in [n]$
\[
0 \ge \limsup_{t \goes \infty} \left[\sigma_k(\bx^*) - \sigma_k(\bx_{c_t, \gamma})\right],
\]
and therefore 
\[
\lim_{t \goes \infty} \max_{k \in [n]}[\max\{0, \sigma_k(\bx^*) - \sigma_k(\bx_{c_t, \gamma})\}] = 0,
\] which proves the theorem.\end{proof}

The lexicographic stability requirement in Theorem~\ref{thm:main} cannot be relaxed without risking non-convergence. Theorem~\ref{thm:sharp_lower} below constructs a lexicographically unstable set for which the exact exponential loss minimizer is bounded away from the lexicographic maximum. The set is a piecewise linear path consisting of two adjoining line segments that is bounded, closed and connected, but not convex.

\begin{theorem} \label{thm:sharp_lower} For all $n \ge 8$ there exists a set $X \subseteq \bbR^n$ consisting of two line segments with a shared endpoint and satisfying $\norm{X}_\infty = 1$ such that for all $\bx^* \in \lexmax X$ and $c \ge 2$ 
\[
d\barpair{\bx^*}{\bx_{c,0}} \ge d_{\{n\}}\barpair{\bx^*}{\bx_{c,0}} \ge \frac12.
\]
\end{theorem}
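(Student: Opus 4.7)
Proof sketch. The plan is to exhibit an explicit $X$ witnessing the claim. Fix $n \ge 8$ and set $\bp = (0,0,\ldots,0,1)$, $\bm = (0,0,\ldots,0,1/2)$, and $\bq = (-1,1/2,1/2,\ldots,1/2,1/2)$ in $\bbR^n$; let $X$ be the union of the two line segments $[\bp,\bm]$ and $[\bm,\bq]$, which share the endpoint $\bm$. Clearly $\|X\|_\infty = 1$. To identify $\lexmax X$: on the open segment $[\bm,\bq]\setminus\{\bm\}$ the first coordinate is strictly negative, so $\sigma_1 < 0$ there; every point of $[\bp,\bm]$ has the form $(0,\ldots,0,z)$ with $z\in[1/2,1]$, so $\sigma_1=\cdots=\sigma_{n-1}=0$ and $\sigma_n=z$. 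Thus any lexicographic maximum must lie on $[\bp,\bm]$, where $\sigma_n$ is uniquely maximized at $\bp$. Hence $\lexmax X = \{\bp\}$ with $\sigma_n(\bp) = 1$.

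Next I locate $\bx_{c,0}$. On $[\bp,\bm]$ parametrized by $t\in[0,1]$ as $(0,\ldots,0,1-t/2)$, we have $L_c(t) = (n-1) + e^{-c(1-t/2)}$, strictly increasing in $t$, so the minimum on this segment is $L_c(\bp) = n-1+e^{-c}$. On $[\bm,\bq]$ parametrized by $t\in[0,1]$ as $(-t, t/2,\ldots,t/2, 1/2)$ we get
\[
L_c(t) = e^{ct} + (n-2)\, e^{-ct/2} + e^{-c/2},
\]
a strictly convex function whose unconstrained minimizer is $t^* = \tfrac{2}{3c}\log\tfrac{n-2}{2}$, with value $3\bigl(\tfrac{n-2}{2}\bigr)^{2/3} + e^{-c/2}$ when $t^* \in (0,1]$. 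In the edge case $t^* > 1$ (which requires $n > 2 + 2e^{3c/2}$, hence very large $n$ relative to $c$), the minimum over the segment is at $\bq$, with value $e^c + (n-1)e^{-c/2}$. The technical crux of the plan is a direct numerical check that in both regimes this minimum is strictly below $L_c(\bp) = n-1+e^{-c}$ for every $n\ge 8$ and $c \ge 2$; the requirement $n\ge 8$ provides just enough slack to verify the ``small $n$'' case, and the ``large $n$'' case (where $t^*>1$) is comfortably true.

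Since $L_c(\bm) = n-1 + e^{-c/2} > L_c(\bp)$, the exponential loss minimizer $\bx_{c,0}$ must then lie on $[\bm,\bq]\setminus\{\bm\}$. But every point on $[\bm,\bq]$ has its largest coordinate equal to the final entry $1/2$ (since $t/2 \le 1/2$ for $t\in[0,1]$), so $\sigma_n(\bx_{c,0}) = 1/2$. Therefore
\[
d_{\{n\}}\barpair{\bx^*}{\bx_{c,0}} = \max\{0,\, \sigma_n(\bp) - \sigma_n(\bx_{c,0})\} = 1 - \tfrac{1}{2} = \tfrac{1}{2},
\]
and $d\barpair{\bx^*}{\bx_{c,0}} \ge d_{\{n\}}\barpair{\bx^*}{\bx_{c,0}}$ follows immediately from Definition~\ref{defn:distort}. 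The main obstacle in this plan is the case-split numerical comparison described above; everything else is routine verification from the construction.
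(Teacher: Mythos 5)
Your proposal is correct in outline and is essentially the paper's own construction: the paper likewise takes two segments sharing the endpoint $(0,\ldots,0,\tfrac12)^\top$, with the ``good'' segment ending at $(0,\ldots,0,1)^\top$ and the ``bad'' segment ending at $(-\tfrac12,\tfrac14,\ldots,\tfrac14,\tfrac12)^\top$ (the first half of your segment to $\bq$), and follows the same skeleton---identify $\lexmax X$ as the top of the good segment, show every exact minimizer of $L_c$ has loss strictly below $L_c(\bx^*)=n-1+e^{-c}$ and hence lies on the bad segment, where $\sigma_n\equiv\tfrac12$. The one real difference is how the loss comparison is established. The paper never optimizes over the bad segment: it evaluates $L_c$ at the single witness $t=1/c$ (in your parametrization), giving $e+(n-2)e^{-1/2}+e^{-c/2}$, so the comparison reduces to the uniform inequality $(e-1)+(n-2)(e^{-1/2}-1)+e^{-1}<0$, valid for all $n\ge 8$, $c\ge 2$ with no case analysis. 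You instead compute the exact constrained minimizer, which yields the precise optimal value $3((n-2)/2)^{2/3}+e^{-c/2}$ when $t^*\le 1$, but costs a case split and leaves the crux as a ``direct numerical check'' over all $n\ge 8$ and $c\ge 2$; since that is an infinite family, you still owe a short argument rather than a check, e.g.\ note that $m\mapsto m-3(m/2)^{2/3}$ is nondecreasing for $m\ge 2$ and that $e^{-c/2}-e^{-c}\le e^{-1}-e^{-2}$ for $c\ge 2$, after which verifying $n=8$ (namely $3\cdot 3^{2/3}\approx 6.24 < 6+1-(e^{-1}-e^{-2})\approx 6.77$) closes the small-$n$ regime, and your large-$n$ regime ($t^*>1$, so $n-2>2e^{3c/2}$) is indeed comfortable because $(n-1)(1-e^{-c/2})$ then dominates $e^c-e^{-c}$. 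With that elementary verification written out (and a one-line remark that $\bx_{c,0}$ exists by compactness), your argument is complete; the paper's witness-point trick simply buys a cleaner bound that is uniform in $c$ and manifestly linear in $n$, while your route additionally pins down where on the bad segment the minimizer sits.
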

\begin{proof}[Proof sketch]
The full construction and proof are given in
Appendix~\ref{sec:thm:sharp-lower}.
Briefly,
the lexicographic maximum $\bx^*$ in $X$ is a vector consisting of $0$ in the first $n - 1$ components and $1$ in the $n$th component.
For all $\eps > 0$, $X$ also includes a vector $\bx^{(\eps)}$ whose first component is $-\frac{\eps}{2}$, next $n-2$ components are $\frac{\eps}{4}$, and $n$th component is $\frac12$. We prove that for all $n \ge 8$ and $c \ge 2$ there exists $\eps > 0$ such that $L_c(\bx^{(\eps)}) < L_c(\bx^*)$, essentially because when $n$ is sufficiently large the lower loss on the middle $n-2$ components compensates for the higher loss on the first and last components. Observing that $\sigma_n(\bx^{(\eps)}) = \sigma_n(\bx^*) - \frac12$ completes the proof.
\end{proof}

From Theorem~\ref{thm:main} it immediately follows that the set constructed in Theorem~\ref{thm:sharp_lower} is not lexicographically stable. We can also give more direct intuition for why the set is unstable. The set $X$ in Theorem~\ref{thm:sharp_lower} consists of a ``good'' and a ``bad'' line segment, and the unique lexicographic maximum is a point on the ``good'' line segment. If the optimization problem in the first iteration of Algorithm~\ref{alg:iterative} is solved exactly, then the smallest component of the solution will be equal to $0$. In this case, every iteration of the algorithm will output a solution on the ``good'' line segment, since only points on that segment have a smallest component that is at least $0$. However, if the optimization problem in the first iteration is solved with a tolerance $\eps > 0$, then the smallest component of the solution can be as small as $-\eps$. In this case, every iteration of the algorithm will output a solution on the ``bad'' line segment, since the $2$nd, $3$rd, \ldots, $(n-1)$th smallest components of the points on that segment are larger than the corresponding components of the points on the ``good'' line segment. However, the largest component of each point on the ``good'' line segment is equal to $1$, while the largest component of each point on the ``bad'' line segment is equal to $\frac12$. As a result, the algorithm outputs a vector whose largest component has a value that is far from its lexicographic maximum.

Theorem \ref{thm:sharp_lower} provides a much stronger example of instability than the construction of \citet{diana2021lexicographically}, who showed that for all $\eps > 0$ there exists a set $X$ such that $\algout{\eps}$, the set of possible outputs on input $(X,\eps)$, contains an element that is far from the lexicographic maximum $\bx^*$ of $X$. By contrast, Theorem~\ref{thm:sharp_lower} reverses the order of the quantifiers, and shows that there exists a set $X$ such that for all $\eps > 0$ the set $\algout{\eps}$ contains an element that is far from $\bx^*$.

\subsection{Convergence rates}

Theorems~\ref{thm:upper} and~\ref{thm:linesegmentlowerbound} below give bounds on the rate at which a near or exact minimizer $\bx_{c, \gamma} \in X$ of the exponential loss function $L_c(\bx)$ converges to a lexicographic maximum $\bx^* \in \lexmax X$ as $c \goes \infty$. Theorem~\ref{thm:upper} states that the smallest and second smallest components of $\bx_{c, \gamma}$ are never more than
$O(1/c)$ below
their lexicographically maximum values, provided that $\gamma \in [0, 1)$,
so that an arbitrarily good approximation is possible by making $c$ large.
Note that the theorem makes no assumptions about $X$, not even that it is lexicographically stable. While the rate of convergence for the smallest component of $\bx_{c, \gamma}$ has been studied previously \citep{Rosset2004}, we believe that we are the first to prove unconditional convergence, even asymptotically, for the second smallest component.

\begin{theorem} \label{thm:upper} For all $n \ge 2$, $X \subseteq \bbR^n, \bx^* \in \lexmax X, c > 0$ and $\gamma \in [0, 1)$ 
\[
d_{\{1, 2\}}\barpair{\bx^*}{\bx_{c, \gamma}} \le \frac1c \log\left(\frac{n - k + 1}{1 - \gamma}\right).
\]
\end{theorem}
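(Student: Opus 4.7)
The plan is to read the bound off directly from Lemma~\ref{thm:helper}, applied separately for $k=1$ and $k=2$. By definition, $d_{\{1,2\}}\barpair{\bx^*}{\bx_{c,\gamma}} = \max_{k\in\{1,2\}}\max\{0,\sigma_k(\bx^*) - \sigma_k(\bx_{c,\gamma})\}$, so it suffices to control each of the two differences on the right.

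For $k=1$, note that $X_0(\bx_{c,\gamma}) = X$, so Lemma~\ref{thm:helper} gives
\[
\sigma_1(\bx_{c,\gamma}) \ge \sup_{\bx \in X} \sigma_1(\bx) - \tfrac{1}{c}\log\tfrac{n}{1-\gamma}.
\]
Since $\bx^*\in\lexmax X$, it satisfies $\sigma_1(\bx^*) = \sup_{\bx\in X}\sigma_1(\bx)$ (this is immediate from Definition~\ref{defn:lexopt}), so $\sigma_1(\bx^*) - \sigma_1(\bx_{c,\gamma}) \le \tfrac{1}{c}\log\tfrac{n}{1-\gamma}$.

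For $k=2$, Lemma~\ref{thm:helper} gives
\[
\sigma_2(\bx_{c,\gamma}) \ge \sup_{\bx \in X_1(\bx_{c,\gamma})} \sigma_2(\bx) - \tfrac{1}{c}\log\tfrac{n-1}{1-\gamma}.
\]
The only slightly nontrivial observation is that $\bx^*$ lies in the feasible set $X_1(\bx_{c,\gamma})$ of the supremum on the right; this holds because $\sigma_1(\bx^*) \ge \sigma_1(\bx_{c,\gamma})$, again by the lex-max property. Hence the supremum is at least $\sigma_2(\bx^*)$, and we conclude $\sigma_2(\bx^*) - \sigma_2(\bx_{c,\gamma}) \le \tfrac{1}{c}\log\tfrac{n-1}{1-\gamma}$. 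Taking the maximum over $k\in\{1,2\}$ (and discarding nonpositive contributions) yields the bound.

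There is no real obstacle here: the theorem is essentially a two-line corollary of Lemma~\ref{thm:helper}, and the argument explains why the statement is restricted to $I=\{1,2\}$. To push the same argument to $k\ge 3$ would require $\bx^* \in X_{k-1}(\bx_{c,\gamma})$, i.e.\ $\sigma_i(\bx^*)\ge\sigma_i(\bx_{c,\gamma})$ for every $i \le k-1$, and the lex-max property gives no such guarantee once $\sigma_1(\bx^*)$ is strictly larger than $\sigma_1(\bx_{c,\gamma})$ (the lex-max comparison locks in after the first index where the two differ). This is exactly the obstruction that makes the authors' companion lower bound for components $k\ge 3$ possible, and it is what motivates the hypothesis of lexicographic stability in Theorem~\ref{thm:main}.
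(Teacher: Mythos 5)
Your proof is correct and follows essentially the same route as the paper: both reduce the theorem to Lemma~\ref{thm:helper} and then verify that $\sup_{\bx \in X_{k-1}(\bx_{c,\gamma})} \sigma_k(\bx) \ge \sigma_k(\bx^*)$ for $k\in\{1,2\}$. The only cosmetic difference is that for $k=2$ you argue via feasibility of $\bx^*$ in $X_1(\bx_{c,\gamma})$, whereas the paper writes $\sigma_2(\bx^*)$ as a constrained supremum and relaxes the constraint using $\sigma_1(\bx^*)\ge\sigma_1(\bx_{c,\gamma})$ --- the same inequality in a slightly different dressing.
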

\begin{proof} By Lemma~\ref{thm:helper} we only need to show 
\[
\sup_{\bx \in X_{k-1}(\bx_{c,\gamma})} \sigma_k(\bx) \ge \sigma_k(\bx^*) 
\]
for $k \in \{1, 2\}$. By the definition of $\bx^*$ we have
$\sup_{\bx \in X} \sigma_1(\bx) = \sigma_1(\bx^*)$,
and since $X_0(\bx_{c, \gamma}) = X$ this implies $\sup_{\bx \in X_0(\bx_{c,\gamma})} \sigma_1(\bx) = \sigma_1(\bx^*)$.
We also have
\begin{align*}
\sigma_2(\bx^*) &= \sup_{\bx \in X : \sigma_1(\bx) \ge \sigma_1(\bx^*)} \sigma_2(\bx) & \because \textrm{Definition of }\bx^*\\
&\le \sup_{\bx \in X : \sigma_1(\bx) \ge \sigma_1(\bx_{c,\gamma})} \sigma_2(\bx) & \because \sigma_1(\bx^*) \ge \sigma_1(\bx_{c,\gamma}) \\
&= \sup_{\bx \in X_1(\bx_{c,\gamma})} \sigma_2(\bx) & \because \textrm{Eq.~\eqref{eq:constraints}} & \qedhere
\end{align*}
\end{proof}

In contrast to Theorem~\ref{thm:upper}, the situation is very different for the $k$th smallest component of $\bx_{c, \gamma}$ for all $k \ge 3$. Theorem~\ref{thm:linesegmentlowerbound} states that this component can remain far below its lexicographically maximum value for arbitrarily large values of $c$, even if $X$ is a bounded line segment (and thus is lexicographically stable) and $\gamma = 0$ ({i.e.}, the minimization is exact).

\begin{theorem} \label{thm:linesegmentlowerbound} For all $n \ge k \ge 3$ and $a \ge 1$ there exists a line segment $X \subseteq \bbR^n$ satisfying
$\norm{X}_\infty = 1$ such that for all $\bx^* \in \lexmax X$ and $c > 0$
\[
d\barpair{\bx^*}{\bx_{c,0}} \ge d_{\{k\}}\barpair{\bx^*}{\bx_{c,0}} \ge \frac13 \min\left\{1, \frac{a}{c}\right\}.
\]
\end{theorem}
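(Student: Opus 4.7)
The plan is to construct, for each triple $(n, k, a)$ with $n \ge k \ge 3$ and $a \ge 1$, an explicit line segment $X \subseteq \bbR^n$ between two endpoints $\bx^{(0)}$ and $\bx^{(1)}$. The endpoint $\bx^{(0)}$ has $x_i = 0$ for $i \le k - 1$ and $x_i = 1$ for $i \ge k$, while $\bx^{(1)}$ has $x_1 = -\eta$, $x_i = 2/3$ for $2 \le i \le k$, and $x_i = 1$ for $k < i \le n$, where $\eta > 0$ is a small parameter depending on $k$ and $a$ (it will suffice to take $\eta = 1/(3 e^a)$). Set $\bx(t) = (1 - t) \bx^{(0)} + t \bx^{(1)}$ and $X = \{\bx(t) : t \in [0, 1]\}$.

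The first step is to verify, by inspecting the components of $\bx(t)$, three elementary facts: $\norm{X}_\infty = 1$; the coordinate $\sigma_1(\bx(t)) = -t\eta$ is uniquely maximized at $t = 0$, so $\bx^* = \bx^{(0)}$ is the unique lexicographic maximum; and $\sigma_k(\bx(t)) = 1 - t/3$ for all $t \in [0, 1]$. The last of these gives $d_{\{k\}}\barpair{\bx^*}{\bx(t)} = t/3$, reducing the theorem to a lower bound on the minimizer $t^*$ of $t \mapsto L_c(\bx(t))$ on $[0, 1]$.

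The key observation is that
\[
L_c(\bx(t)) = e^{ct\eta} + (k-2) e^{-2ct/3} + e^{-c(1 - t/3)} + (n-k) e^{-c}
\]
is a sum of exponentials of affine functions of $t$, hence strictly convex in $t$, so $t^*$ is unique. By convexity, $t^* \ge t_0$ if and only if the derivative of $L_c(\bx(t))$ at $t_0$ is nonpositive. Setting $t_0 = \min\{1, a/c\}$ and $s_0 = c t_0 = \min\{c, a\} \le a$, a direct computation of the derivative, together with the inequality $-c + s_0 / 3 \le -2 s_0 / 3$ (which follows from $s_0 \le c$), yields
\[
\frac{1}{c} \frac{d}{dt} L_c(\bx(t))\Big|_{t = t_0} \le \eta e^{s_0 \eta} - \frac{2k - 5}{3} e^{-2 s_0 / 3}.
\]
For $k \ge 3$ the right-hand side is nonpositive as long as $\eta e^{a(\eta + 2/3)} \le (2k - 5)/3$, which is routinely checked for $\eta = 1/(3 e^a)$ and $a \ge 1$.

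The main obstacle is designing a single construction (and a single $\eta$) that delivers the bound uniformly over all $c > 0$: for small $c$ the minimizer sits at the boundary $t^* = 1$, whereas for large $c$ it moves toward $0$ at rate $a/c$. The unified target $t_0 = \min\{1, a/c\}$, together with the inequality $-c + s_0 / 3 \le -2 s_0 / 3$ that always folds the last exponential into the middle one, is what patches these two regimes together. Once $t^* \ge \min\{1, a/c\}$ is established, the conclusion $d_{\{k\}}\barpair{\bx^*}{\bx_{c,0}} = t^*/3 \ge (1/3) \min\{1, a/c\}$ is immediate, and the outer inequality $d\barpair{\bx^*}{\bx_{c,0}} \ge d_{\{k\}}\barpair{\bx^*}{\bx_{c,0}}$ holds by definition.
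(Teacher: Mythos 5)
Your proposal is correct. I verified the key steps: with your segment, $\sigma_1(\bx(t))=-t\eta$, $\sigma_i(\bx(t))=2t/3$ for $2\le i\le k-1$, $\sigma_k(\bx(t))=1-t/3$, and $\sigma_i(\bx(t))=1$ for $i>k$, so $\bx(0)$ is the unique lexicographic maximum, $\norm{X}_\infty=1$, and $d_{\{k\}}\barpair{\bx^*}{\bx(t)}=t/3$; moreover your derivative bound holds, since with $s_0=ct_0\le\min\{a,c\}$ one gets $\tfrac1c\,\tfrac{d}{dt}L_c(\bx(t))\big|_{t=t_0}\le \eta e^{s_0\eta}-\tfrac{2k-5}{3}e^{-2s_0/3}$, and $\eta e^{a(\eta+2/3)}=\tfrac13 e^{a\eta-a/3}<\tfrac13\le\tfrac{2k-5}{3}$ because $a\eta=\tfrac{a}{3e^a}\le\tfrac{1}{3e}<\tfrac{a}{3}$ for $a\ge 1$ and $k\ge 3$. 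Your route is the same in spirit as the paper's (a line segment that trades a tiny loss in the smallest coordinate for a large gain in middle coordinates, analyzed through the one-dimensional exponential loss along the segment), but the execution of the optimization step is genuinely different: the paper fixes $\beta=2/3$, defines its perturbation $\eps$ implicitly by $a=\frac1\beta\log\frac{2\beta-1-\eps}{\eps}$, and splits into the cases $c\le a$ (minimizer at the far endpoint) and $c>a$ (interior critical point solved explicitly via a substitution), whereas you choose the explicit $\eta=1/(3e^a)$ and use strict convexity to conclude $t^*\ge t_0$ from a single one-sided sign check of the derivative at $t_0=\min\{1,a/c\}$, which unifies the two regimes and avoids solving for the minimizer altogether; this is arguably cleaner, at the cost of yielding only the lower bound on $t^*$ rather than its location. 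One cosmetic slip: the claim that $t^*\ge t_0$ holds ``if and only if'' the derivative at $t_0$ is nonpositive is not quite right at the boundary, but you only use the correct direction (nonpositive derivative implies $t^*\ge t_0$), so nothing is affected.
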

\begin{proof}[Proof sketch] We consider only the case $n = 3$ here, as the general case $n \ge 3$ proceeds very similarly. The complete proof is provided in
Appendix~\ref{sec:linesegmentlowerbound}.

If $n = 3$ then we define $X$ to be the line segment joining the following two points:
\[
\bx^* = \left(\eps, \eps, 1\right)^\top \textrm{ and }
\bx' = \left(0, \frac23, \frac23\right)^\top
\]
where $\eps > 0$. Clearly $\bx^*$ is the lexicographic maximum of $X$.
As discussed earlier,
if $c$ is large then the dominant term in $L_c(\bx)$ corresponds to the smallest component of $\bx$. However if $c$ is small then several of the largest terms in $L_c(\bx)$ can have similar magnitude. We show that if $c \le a = \Omega(\log \frac1\eps)$ then at least the two largest terms in $L_c(\bx)$, which correspond to the two smallest components of $\bx$, have roughly the same magnitude. In this case the minimizer of $L_c(\bx)$ will be much closer to $\bx'$ than to $\bx^*$, because the second smallest component of $\bx'$ is roughly $\frac23$ larger than the second smallest component of $\bx^*$, while the smallest component of $\bx^*$ is only $\eps$ larger than the smallest component of $\bx'$.
\end{proof}

\subsection{A related algorithm using multiplicative weights}

Next, we discuss a related approach for finding a lexicographic maximum using no-regret strategies to solve an associated zero-sum game,
as was considered in great detail by \cite{Syed2010}.
This is another natural approach for finding lexicographic maxima
since, at least when $X$ is a convex polytope, we can view the lexicographic maximization computational task through the lens of solving a zero-sum game, a problem where no-regret algorithms have found a great deal of use. 

In the game theory perspective, we are trying to solve the following minimax problem:
$$
    \min_{\bp \in \Delta_m} \max_{\bq \in \Delta_n} \bp^\top \M \bq
$$
where $\M \in \bbR^{m \times n}$ and $\Delta_m, \Delta_n$ are the
probability simplices on $m,n$ items, respectively. An
\textit{equilibrium pair} of this minimax problem is a pair of distributions $\hat \bp \in \Delta_m$ and $\hat \bq \in \Delta_n$ satisfying
\[
    \bp^\top \M \hat \bq  \geq \hat \bp^\top \M \hat \bq \geq \hat \bp^\top \M \bq  
\]
for all $\bp \in \Delta_m$ and $\bq \in \Delta_n$. 
\citet{v1928theorie} showed that such a ``minimax-optimal'' pair, commonly known as a Nash equilibrium in a zero-sum game, always exists for every $\M$. 
There has been considerable work on how to compute such a pair,
including through the use of no-regret online learning algorithms for
sequentially updating $\bp$ and~$\bq$. For example, Multiplicative Weights (Algorithm \ref{alg:mwu}) is known to compute an $\epsilon$-approximate equilibrium of the game given by $\M$, with
$\epsilon = O\parens{\sqrt{\log(m)/T} + \sqrt{\log(n)/T}}$
\citep{FreundSc99}.

\RestyleAlgo{ruled}
\SetKw{KwRet}{Return:}
\begin{algorithm}\caption{Multiplicative weights method for computing a Nash equilibrium \label{alg:mwu}}
\KwIn{$\M \in \bbR^{m \times n}$, num. iter. $T$}
\KwIn{$\eta_1, \eta_2, \ldots > 0$ learning parameters}
$\bp_1 \gets \parens{\frac 1 n, \ldots, \frac 1 n}^\top$

\For{$t = 1, \ldots, T$}{
$\bq_t \leftarrow \text{ any element in } \arg\max_{\bq \in \Delta_n} \bp_t^\top \M \bq$\\
$\bp_{t+1} \leftarrow \frac 1 {Z_{t+1}} \exp\left(-\eta_t \M \sum_{s=1}^{t} \bq_s\right) 
$\\
\quad \quad where $\exp$ is applied coordinate wise, \\
\quad \quad and $Z_{t+1}$ is the normalizer.
}
\KwRet{$\bar \bp_T := \frac 1 T \sum_{t=1}^T \bp_t,\bar \bq_T := \frac 1 T \sum_{t=1}^T \bq_t$}
\end{algorithm}

In this paper, we study how to compute $\lexmax X$ for some $X \subseteq \bbR^m$.
Let us suppose $X$ is a convex polytope, that is, the convex hull of a
finite set of points $S\subseteq\Rm$.
Let $\M$ be a matrix whose columns are the points in $S$, so that
$X = \{\M\bq : \bq \in \Delta_n\}$.
We can then define a lexmax equilibrium strategy for the column player as any $\bq^*$ for which $\M\bq^* \in \lexmax X$. 
A ``lexmin'' equilibrium strategy for the row player
can be defined similarly.
Thus, a lexmin and lexmax equilibrium pair $\bp^*, \bq^*$ is a ``special'' Nash equilibrium which satisfies the additional constraints of being lexicographically optimal.
In this work, we focus only on computing a lexmax solution $\bq^*$ for
the column player.

The use of no-regret algorithms (such as Multiplicative Weights) has been very helpful for finding Nash equilibria in zero-sum games, among many other applications, and the exponentiation used in the update in Algorithm~\ref{alg:mwu} has an attractive similarity to the minimization of the exponential loss (\eqnref{eq:exploss}) considered primarily in this work. A natural question is whether Algorithm~\ref{alg:mwu} is suitable for finding not just any equilibrium strategy, but a lexmax equlibrium strategy $\bp^*$ as defined above.
Unfortunately, prior work suggests this is not the case
when the learning parameter $\eta_t$ is fixed to a constant:

\begin{theorem}[Informal summary of \protect{\cite[Theorem~3.7]{Syed2010}}]
There is a family of game matrices $\M \in \bbR^{3 \times 4}$ such that if Algorithm~\ref{alg:mwu} is run with a constant learning parameter $\eta_t = \eta$, the output $\bar \bq_T$ will not converge to a lexmax equilibrium strategy for the row player as $T \to \infty$.
\end{theorem}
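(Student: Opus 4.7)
The proof strategy is to exhibit a concrete family of $3\times 4$ matrices whose set of column-player equilibrium strategies is a positive-dimensional polytope $Q^{*}\subseteq\Delta_4$ with image $\M Q^{*}$ having nontrivial lexicographic structure, and then to show that constant-$\eta$ MWU has time-averaged column play $\baq_T$ converging to a point in $Q^{*}$ whose image under $\M$ differs from $\lexmax(\M Q^{*})$.

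First I would construct $\M$ by taking a $3\times 3$ submatrix $\M_0$ whose unique Nash equilibrium $(\bp^{*},\bq')$ has $\bp^{*}$ supported only on rows $1$ and $2$, and appending a fourth column $\bm_4\in\bbR^3$ satisfying $\bp^{*\top}\bm_4 = v^{*}$, where $v^{*}$ is the game value of $\M_0$. This makes the column player indifferent between the original three columns and the fourth, extending the column-player equilibrium set from $\{\bq'\}$ to a polytope $Q^{*}\subseteq\Delta_4$ whose image $\M Q^{*}$ is a line segment in $\bbR^{3}$. Because row $3$ is outside the support of $\bp^{*}$, the third component of $\M\bq$ can vary over $\bq\in Q^{*}$, so $\M Q^{*}$ is genuinely one-dimensional; Theorem~\ref{thm:lexopt-unique} then pins its lexmax to a unique point $\by^{*}$ that, by generic choice of $\bm_4$, lies strictly between the two endpoints of the segment, so the lexmax equilibrium $\bq^{*}$ sits in the relative interior of $Q^{*}$.

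Next I would analyze Algorithm~\ref{alg:mwu} on this $\M$ with $\eta_t\equiv\eta$. Each $\bq_t$ is a vertex of $\Delta_4$ by linearity of the column-player objective, so the row-player trajectory $\bp_t$ is a deterministic function of the empirical frequencies $\baq_t$, and the regret bound of \citet{FreundSc99} guarantees $\baq_T$ clusters within $O(\eta)$ of $Q^{*}$. To pin down which point of $Q^{*}$ is the limit, I would arrange $\M$ to be invariant under a column-permutation $\pi$ that fixes $Q^{*}$ setwise but not pointwise, paired with a $\pi$-symmetric choice of $\bp_1$. Symmetry of the MWU update under $\pi$ then forces $\baq_T$ to converge to the unique $\pi$-invariant point $\tilde{\bq}\in Q^{*}$; choosing $\bm_4$ so that $\bq^{*}\neq\tilde{\bq}$ yields $\M\baq_T\goes\M\tilde{\bq}\neq\by^{*}$, establishing non-convergence to the lexmax equilibrium.

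The main obstacle is this dynamical-systems step: the regret bound alone does not identify the specific limit of $\baq_T$, only that it is $O(\eta)$-approximately optimal. To rule out accidental convergence to $\bq^{*}$, one can either exploit the Kullback-Leibler conservation enjoyed by the continuous-time replicator analog and argue that constant-$\eta$ MWU tracks this invariant up to $O(\eta)$, pinning the limit through the initial Bregman potential, or analyze the discrete trajectory directly via the cyclic symmetry above. The proof must additionally defeat ``lucky'' choices of $\eta$ for which $\baq_T$ happens to approximate $\bq^{*}$; this is why the theorem is stated for a \emph{family} of matrices, providing the flexibility to select a member that exhibits the pathology for any given~$\eta$.
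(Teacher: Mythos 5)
There is a genuine gap here, and also a mismatch with how the paper handles this statement: the paper does not prove it at all — it is explicitly an informal summary of a result cited from \citet{Syed2010} (their Theorem~3.7), so any proof you give must stand entirely on its own, by explicit analysis of a concrete family of matrices. Your sketch sets up a reasonable construction (append a fourth column making the column player indifferent, so the equilibrium set $Q^*$ is a segment whose image under $\M$ has nontrivial lexicographic structure), but the heart of the theorem — showing that $\baq_T$ stays away from the lexmax equilibrium — is exactly the step you leave open. The Freund--Schapire regret bound with constant $\eta$ only places cluster points of $\baq_T$ within $O(\eta)$ of the equilibrium set; it says nothing about \emph{which} part of $Q^*$ is approached, nor even that $\baq_T$ converges, so it cannot by itself exclude convergence to $\bq^*$.

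The two mechanisms you propose to close this gap do not work as stated. A column-permutation invariance of $\M$ alone forces the permuted columns to be identical, and then the update $\bq_t \gets$ ``any element in $\arg\max$'' is resolved by an arbitrary tie-breaking rule, so the trajectory need not be $\pi$-equivariant and symmetry pins down nothing. If instead you couple a row permutation with a column permutation, pointwise invariance of the trajectory conflicts with your own observation that each $\bq_t$ is a vertex of $\Delta_4$ (a vertex not fixed by $\pi$ cannot be $\pi$-invariant), so at best you get invariance of the \emph{set} of possible trajectories, not convergence of $\baq_T$ to the $\pi$-fixed point of $Q^*$. The KL/replicator-invariant fallback is also unsupported here: that conservation law concerns continuous-time dynamics with both players running multiplicative updates, whereas in Algorithm~\ref{alg:mwu} the column player plays exact best responses, and the assertion that discrete constant-$\eta$ MWU tracks the invariant ``up to $O(\eta)$'' uniformly over an unbounded horizon is precisely the kind of statement that accumulating discretization error defeats unless proved. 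Finally, the claim that the lexmax equilibrium lies in the relative interior of $Q^*$ ``by generic choice'' of the appended column, and the quantifier gymnastics about ``lucky'' values of $\eta$, both need an explicit instance and an explicit computation — which is what the cited work supplies and your sketch does not.
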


On the positive side, \citet{Syed2010} also gives a result for when Algorithm~\ref{alg:mwu} computes a lexocographically optimal solution, but only in a very specific case where the solution has distinct values.

We now aim to rehabilitate Algorithm~\ref{alg:mwu}, the work of \cite{Syed2010} notwithstanding. As we will see, the choice of learning parameters $\eta_t$ is indeed quite important. First, let us define the function $H_c : \Delta_n \to \bbR$ as
\begin{align}
    H_c(\bq) := \frac 1 c \log \left( \sum_{i=1}^m \exp(-c \be_i^\top M \bq) \right),
\end{align}
where $\be_i$ is the $i$th basis vector.
This function is strongly related to $L_c(\cdot)$ as in \eqnref{eq:exploss}, except that we have $\frac 1 c \log(\cdot)$ operating on the outside. Notice, however, that the log transformation is monotonically increasing, so any minimizer of the exponential loss also minimizes $H_c(\cdot)$. Second, we observe that $H_c$ is \textit{$c$-smooth} --- that is, it satisfies $\| \nabla H_c(\bq) - \nabla H_c(\bq') \| \leq c\|\bq -\bq'\|$ for any $\bq, \bq' \in \Delta_n$.

To give our main result in this section, we emphasize that the
following leans on a  primal-dual perspective on optimization
that uses aforementioned tools on game playing.
See \cite{wang2023no} for a complete description.
\begin{theorem}
    If Algorithm~\ref{alg:mwu} is run with parameter $\eta_t = \frac c t$ then 
    \begin{equation*}
        H_c(\bar \bq_T) - \min_{\bq \in \Delta_n} H_c(\bq) = O\left(\frac{c \log T}{T}\right).
    \end{equation*}
\end{theorem}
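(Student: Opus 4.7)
The plan is to recognize Algorithm~\ref{alg:mwu} with step size $\eta_t = c/t$ as a disguised Frank--Wolfe iteration applied to $H_c$ on the simplex $\Delta_n$, and then invoke the standard smoothness-based Frank--Wolfe analysis.

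\emph{Recognizing the Frank--Wolfe structure.} With $\eta_t = c/t$, the MWU update becomes
\[
\bp_{t+1} \;\propto\; \exp\!\bigl(-\eta_t \M \textstyle\sum_{s=1}^{t}\bq_s\bigr) \;=\; \exp\!\bigl(-c\,\M\bar \bq_t\bigr),
\]
i.e., $\bp_{t+1}$ is the softmax of $-c\,\M\bar \bq_t$ evaluated at the running average $\bar \bq_t = \tfrac{1}{t}\sum_{s=1}^{t}\bq_s$. A direct differentiation (the log-sum-exp / entropy Fenchel pair) gives $\nabla H_c(\bq) = -\M^\top \mathrm{softmax}(-c\,\M\bq)$, so $\nabla H_c(\bar \bq_t) = -\M^\top \bp_{t+1}$. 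Consequently
\[
\bq_{t+1} \in \arg\max_{\bq \in \Delta_n} \bp_{t+1}^\top \M\bq \;=\; \arg\min_{\bq \in \Delta_n} \langle \nabla H_c(\bar \bq_t),\,\bq\rangle
\]
is a Frank--Wolfe linear-minimization call at $\bar \bq_t$, and the identity $\bar \bq_{t+1} = \tfrac{t}{t+1}\bar \bq_t + \tfrac{1}{t+1}\bq_{t+1}$ shows that $\{\bar \bq_t\}$ is exactly a Frank--Wolfe iteration on $H_c$ with step size $\gamma_t = 1/(t+1)$.

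\emph{Smoothness-based convergence.} Let $\delta_t := H_c(\bar \bq_t) - \min_{\bq\in\Delta_n} H_c(\bq)$ and $D := \operatorname{diam}(\Delta_n)$. The $c$-smoothness of $H_c$, together with convexity and the FW optimality of $\bq_{t+1}$ (which gives $\langle \nabla H_c(\bar \bq_t), \bq_{t+1}-\bar \bq_t\rangle \le \langle \nabla H_c(\bar \bq_t), \bq^* - \bar \bq_t\rangle \le -\delta_t$), yields the standard FW recursion
\[
\delta_{t+1} \;\le\; (1-\gamma_t)\,\delta_t + \tfrac{c D^2 \gamma_t^2}{2}.
\]
Substituting $\gamma_t = 1/(t+1)$ and multiplying through by $(t+1)$ gives $(t+1)\delta_{t+1} \le t\,\delta_t + \tfrac{cD^2}{2(t+1)}$. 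Telescoping $t = 1,\dots,T-1$ and bounding $\delta_1 = O(1)$ (since $\bar \bq_1 = \bq_1$ is a vertex of $\Delta_n$ and $H_c$ is bounded on the simplex) gives $T\,\delta_T = O(c\log T)$, i.e., $\delta_T = O(c\log T / T)$.

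\emph{Main obstacle.} The conceptual crux is the identification above, which is exactly the primal--dual correspondence alluded to in the theorem statement: the choice $\eta_t = c/t$ is precisely the rescaling that makes MWU's distribution over rows equal the softmax of $-c\,\M$ at the running average $\bar \bq_t$, thereby converting MWU into exact gradient information for $H_c$ at $\bar \bq_t$. Once this is in hand, the rest is the textbook Frank--Wolfe argument; the $\log T$ factor on top of the usual $O(1/T)$ FW rate arises purely because the step size $1/(t+1)$ is slightly suboptimal relative to the textbook choice $2/(t+2)$, and this factor cannot be removed without a different (e.g., line-searched) schedule for $\eta_t$.
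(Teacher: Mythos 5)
Your proposal is correct and follows essentially the same route as the paper: it identifies the multiplicative-weights iteration with $\eta_t = c/t$ as Frank--Wolfe on the $c$-smooth function $H_c$ over $\Delta_n$ with harmonic step sizes, and then applies the smoothness-based Frank--Wolfe analysis to obtain the $O(c\log T/T)$ rate. The only difference is cosmetic: the paper cites a known $O(\alpha \log T/T)$ guarantee for Frank--Wolfe with step size $1/t$, whereas you re-derive it by the standard telescoping recursion (and your explicit statement of the gradient as $-\M^\top$ applied to the softmax is, if anything, a cleaner rendering of the correspondence the paper sketches).
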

\begin{proof}
This proof proceeds in three parts. First, we recall the well-known
\textit{Frank-Wolfe} procedure for minimizing smooth convex functions
on constrained sets,  applied here to $H_c(\bq)$.
Second, we show that, with the appropriate choice of update parameters, Frank-Wolfe applied to $H_c(\bq)$ is identical Algorithm~\ref{alg:mwu}. Finally, we appeal to standard convergence guarantees for Frank-Wolfe to obtain the desired convergence rate.
    
The Frank-Wolfe algorithm, applied to $H_c(\cdot)$, is as follows.
Let  $\bar \bq_0 \in \Delta_n$ be an arbitrary initial point,
and let $\gamma_1, \gamma_2, \ldots > 0$ be a step-size schedule.
On each iteration $t=1, \ldots, T$, we perform the following:
    \begin{align}
        \nabla_t & \gets \nabla H_c(\bar \bq_{t-1}) \\
        \bq_t & \gets \arg\min_{\bq \in \Delta_n} \langle \bq, \nabla_t \rangle \\
        \bar \bq_t & \gets (1-\gamma_t) \bar \bq_{t-1} + \gamma_t \bq_t.
    \end{align}
Ultimately, the algorithm returns $\bar \bq_T$. 

    We now show that this implementation of Frank-Wolfe is identical to Algorithm~\ref{alg:mwu}, as long as we have $\gamma_t = \frac 1 t$. To see this, we need to observe that the gradients $\nabla_t$ can be written as
    \begin{align*}
        \nabla_t = \nabla H_c(\bar \bq_{t-1}) 
        & \propto -\exp(-c \M \bar \bq_{t-1}) \\
        & = -\exp\left( - \frac c {t-1} \M\sum_{s=1}^{t-1} \bq_s \right).
    \end{align*}
    In other words, on every round we have that $\bp_t = - \nabla_t$. Furthermore, the vectors $\bq_t$ are chosen in exactly the same way, as the difference in sign is accounted for by the fact that $\bq_t$ is chosen as an $\arg\max$ in Algorithm~\ref{alg:mwu} as opposed to an $\arg\min$ in Frank-Wolfe.

    Let us finally recall a result that can be found in \cite{abernethy2017frank}, that the Frank-Wolfe algorithm run with parameters $\gamma_t = \frac 1 t$, for any function which is $\alpha$-smooth on its domain, converges at a rate of $O(\alpha \log(T)/T)$. We can now appeal to a well-known result on the convergence of Frank-Wolfe.\footnote{We emphasize that the more classical version of Frank-Wolfe uses slightly different update parameters,  $\gamma_t = \frac 2 {t+2}$, and this indeed can be used to obtain convergence that removes the $\log T$ dependence. But this is outside of the scope of this work.}
\end{proof}

This observation emphasizes that the popular multiplicative update method (Algorithm~\ref{alg:mwu}), that appears to fail to solve the desired problem according to the work of \cite{Syed2010}, actually succeeds in finding a lexicographic maximum insofar as the exponential loss minimization scheme succeeds. The critical ``patch'' that fixes the algorithm is the modified learning rate of $\frac c t$ in place of fixed $\eta$.

\section{Conclusion}

We proved a close connection between the two primary methods for
computing a lexicographic maximum of a set, and used this connection
to show that the method based on exponential loss minimization
converges to a correct solution for sets that are lexicographically
stable.
We believe our results represent the most general convergence criteria for exponential loss minimization that are known. We also undertook the first analysis of the convergence rate of exponential loss minimization, and found that even when convergence is guaranteed, the components of the minimizing vector can converge at vastly different rates. Finally, we showed that the well-known Multiplicative Weights algorithm can find a lexicographic maximum of a lexicographically stable set if the learning rate is suitably chosen.

\section*{Acknowledgements}

We thank Miroslav Dud\'ik , Saharon Rosset and Matus Telgarsky and for helpful discussions, and the anonymous reviewers for their comments and suggestions.

\bibliography{papers}

\appendix

\section{Appendix}
\label{sec:proofs}

\subsection{Lipschitz continuity of sorting functions}
\label{sec:sorting-cont}

\begin{theorem}\label{thm:continuitysorting} For all $k \in [n]$ and $\bx, \bx' \in \bbR^n$ we have $|\sigma_k(\bx) - \sigma_k(\bx')| \le 3\norm{\bx - \bx'}_\infty$.
Therefore, $\sigma_k$ is continuous.
\end{theorem}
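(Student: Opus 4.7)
My plan is to prove the sharper inequality $|\sigma_k(\bx) - \sigma_k(\bx')| \le \norm{\bx - \bx'}_\infty$, which immediately implies the stated bound with constant $3$ (indeed with any constant $\ge 1$). The main tool will be a variational characterization of the $k$-th order statistic as a max-of-mins, reducing the problem to the well-known fact that taking a minimum over a fixed index set is non-expansive in $\ell_\infty$.

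First I would establish the identity
\[
\sigma_k(\bx) \;=\; \max_{S \subseteq [n],\, |S| = n - k + 1} \; \min_{i \in S} x_i.
\]
The ``$\ge$'' direction comes from choosing $S$ to be the indices of the $n - k + 1$ largest coordinates of $\bx$, for which the minimum over $S$ is exactly $\sigma_k(\bx)$. The matching ``$\le$'' direction is a pigeonhole argument: any $S$ of size $n - k + 1$ excludes at most $k - 1$ coordinates, so it must contain at least one of the $k$ smallest coordinates of $\bx$, forcing $\min_{i \in S} x_i \le \sigma_k(\bx)$.

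Next, for any fixed $S \subseteq [n]$, the coordinatewise bound $x'_i \ge x_i - \norm{\bx - \bx'}_\infty$ for all $i \in S$ yields $\min_{i \in S} x'_i \ge \min_{i \in S} x_i - \norm{\bx - \bx'}_\infty$, and by symmetry $|\min_{i \in S} x_i - \min_{i \in S} x'_i| \le \norm{\bx - \bx'}_\infty$. Inserting this uniformly under the maximum over $S$ via the standard inequality $|\max_S f(S) - \max_S g(S)| \le \max_S |f(S) - g(S)|$ delivers the Lipschitz bound, and continuity of $\sigma_k$ is then immediate.

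The only step requiring any real thought is the variational identity for $\sigma_k$; once that is in hand the Lipschitz estimate is routine, and I anticipate no significant obstacle. The factor of $3$ in the theorem statement appears to be a comfortably slack constant the authors found convenient to write down, while this approach yields the tight constant $1$.
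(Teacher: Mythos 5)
Your argument is correct, and it takes a genuinely different route from the paper's. You prove the sharper bound $|\sigma_k(\bx) - \sigma_k(\bx')| \le \norm{\bx - \bx'}_\infty$ via the variational identity $\sigma_k(\bx) = \max_{|S| = n-k+1} \min_{i \in S} x_i$ (both directions of which you justify adequately; ties cause no trouble since one simply takes $S$ to be any set of indices realizing the $n-k+1$ largest sorted values, and the pigeonhole step only needs that a set of $k$ indices realizing the $k$ smallest values cannot be disjoint from $S$), and then you push the perturbation through using the non-expansiveness of $\min$ and $\max$ in the sup norm. The paper instead argues directly on indices: assuming $\sigma_k(\bx) \le \sigma_k(\bx')$, it picks $i,j$ with $\sigma_k(\bx) = x_i$ and $\sigma_k(\bx') = x'_j$, handles the case $x_i \ge x_j - 2\eps$ by a chain of triangle-type bounds giving the factor $3$, and rules out $x_i < x_j - 2\eps$ by a counting/pigeonhole contradiction on the number of coordinates below a threshold. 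Your approach buys the optimal Lipschitz constant $1$ and a reusable structural identity for order statistics, at the cost of introducing the max--min characterization; the paper's is a self-contained ad hoc comparison whose constant $3$ is indeed slack but harmless, since the theorem is only invoked to get continuity of $\sigma_k$ (and convergence statements where any fixed constant suffices).
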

\begin{proof} Let $\eps = \norm{\bx - \bx'}_\infty$ and assume without loss of generality that $\sigma_k(\bx) \le \sigma_k(\bx')$. It suffices to show that $\sigma_k(\bx') \le \sigma_k(\bx) + 3\eps$. Choose $i, j \in [n]$ such that $\sigma_k(\bx) = x_i$ and $\sigma_k(\bx') = x'_j$. If $x_i \ge x_j - 2\eps$ then
\[
\sigma_k(\bx') = x'_j \le x_j + \eps \le x_i + 3\eps = \sigma_k(\bx) + 3\eps.
\]
It remains to show that $x_i < x_j - 2\eps$ cannot be true. Suppose it is. Since $\sigma_k(\bx) = x_i$, we have
\[
k \le |\{ \ell \in [n] : x_\ell \le x_i\}| \le |\{ \ell \in [n] : x_\ell < x_j - 2\eps\}|.
\]
Since $\max_{\ell \in [n]} |x_\ell - x'_\ell| \le \eps$, this implies
\[
k \le |\{ \ell \in [n] : x'_\ell < x'_j\}|
\]
which implies $\sigma_k(\bx') < x'_j$, a contradiction.
\end{proof}

\subsection{Proof of Theorem \ref{thm:convex}}
\label{sec:convex-polytop-stable}

In this section, we prove Theorem~\ref{thm:convex}.
We first sketch an outline.

As preliminary steps, we begin by introducing a function $\Hk$
that, in a sense, summarizes the optimization problem being solved
(approximately) on each round of Algorithm~\ref{alg:iterative}.
Since this function is somewhat difficult to work with, we next
introduce another function with much more favorable properties,
denoted $\GI$, where $I\subseteq [n]$.
We show that, if $|I|=k-1$ then $\Hk\geq\GI$.
Moreover, if $X$ is a convex polytope, then
$\GI$ is concave and lower semicontinuous.

With these definitions and preliminaries, we then proceed below with the
proof of Theorem~\ref{thm:convex}.
By way of contradiction, we suppose that $X$ is a convex polytope but
nevertheless is not lexicographically stable.
This implies the existence of a sequence $\eps_t>0$ with
$\eps_t\rightarrow 0$, and another sequence $\xx_t\in\algout{\eps_t}$
converging to a point $\xhat\in X$ that is different than $X$'s
lexicographic maximum $\xopt$.
With some re-indexing,
we then identify an index $k\in [n]$ such that $\Hk(\xx_t)$ is
asymptotically upper-bounded by $\hatx_k$, while, on the other hand
$\GI(\xhat)$ strictly exceeds $\hatx_k$, for some $I\subseteq [n]$
with $|I|=k-1$.
Combining, these facts lead to the contradiction:
\[
  \hatx_k
  <
  \GI(\xhat)
  \leq
  \liminf_{t\rightarrow\infty} \GI(\xx_t)
  \leq
  \liminf_{t\rightarrow\infty} \Hk(\xx_t)
  \leq
  \hatx_k.
\]

We now provide details.
Let $\Rext=\R\cup\{-\infty,+\infty\}$.
For $k\in[n]$, we define the function
$\Hk:\Rn\rightarrow\Rext$ by
\begin{equation}  \label{eqn:hk-defn}
  \Hk(\xx)
  =
  \sup\braces{ \sigma_k(\zz) :
               \zz\in X, \sigma_i(\zz)\geq\sigma_i(\xx)
                         \mbox{ for } i=1,\ldots,k-1 }.
\end{equation}
Then Algorithm~\ref{alg:iterative} operates exactly by choosing, on each round
$k$, $\xs{k}$ so that
\begin{equation}  \label{eqn:hk-rel-alg}
  \sigma_k(\xs{k})\geq \Hk(\xs{k-1})-\eps
\end{equation}
(and also so that $\sigma_i(\xs{k})\geq\sigma_i(\xs{k-1})$ for
$i\in [k-1]$).

In fact, this condition can be simplified:
As we show next, a point $\xx$ is a possible output
of Algorithm~\ref{alg:iterative} if and only if it satisfies
\eqnref{eqn:hk-rel-alg} with both 
$\xs{k-1}$ and $\xs{k}$ replaced by $\xx$.
Said differently, $\xx$ is valid in the sense of the proof of
Theorem~\ref{thm:main}
if and only if it is a possible output.

\begin{proposition}  \label{pr:poss-output-simplify}
  Let $\xx\in X$, and let $\eps\geq 0$.
  Then $\xx\in\algout{\eps}$ if and only if
  \begin{equation}  \label{eq:pr:poss-output-simplify:1}
    \sigma_k(\xx)\geq \Hk(\xx) -\eps
  \end{equation}
  for all $k\in [n]$.
\end{proposition}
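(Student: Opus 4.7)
The plan is to verify both directions directly from the definition of Algorithm~\ref{alg:iterative} and from \eqnref{eqn:hk-defn}, exploiting a simple anti-monotonicity property of $H_k$ combined with a telescoping argument over the iterates.

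For the ``if'' direction, the idea is that if $\xx$ satisfies \eqref{eq:pr:poss-output-simplify:1} at every level $k$, then the trivial execution $\xs{k}=\xx$ for all $k$ is already a valid run of the algorithm: the feasibility constraint $\sigma_i(\xs{k})\ge\sigma_i(\xs{k-1})$ collapses to $\sigma_i(\xx)\ge\sigma_i(\xx)$, and the $\eps$-optimality requirement $\sigma_k(\xs{k})\ge H_k(\xs{k-1})-\eps$ reduces exactly to the hypothesis. So this direction should be essentially a one-liner.

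For the ``only if'' direction, I would fix iterates $\xs{0},\ldots,\xs{n}=\xx$ witnessing $\xx\in\algout{\eps}$ and, for each $k\in[n]$, work from the algorithm's guarantee $\sigma_k(\xs{k})\ge H_k(\xs{k-1})-\eps$ back to $\sigma_k(\xx)\ge H_k(\xx)-\eps$. Two observations drive this. First, $H_k$ is \emph{anti-monotone} in its argument: enlarging the $k-1$ smallest components of $\yy$ only shrinks the feasible region of the supremum in \eqnref{eqn:hk-defn}, so $\sigma_i(\yy)\ge\sigma_i(\zz)$ for all $i\in[k-1]$ forces $H_k(\yy)\le H_k(\zz)$. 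Second, the feasibility constraint $\sigma_i(\xs{k'})\ge\sigma_i(\xs{k'-1})$ holds whenever $i\le k'-1$, so telescoping from $k'=i+1$ up to $n$ gives $\sigma_i(\xs{n})\ge\sigma_i(\xs{j})$ for all $j\ge i$. Specializing to $i\in[k-1]$ and $j=k-1$ yields $H_k(\xx)\le H_k(\xs{k-1})$ via anti-monotonicity, and specializing to $i=j=k$ yields $\sigma_k(\xx)\ge\sigma_k(\xs{k})$; chaining these two with the algorithm's guarantee then delivers the desired inequality.

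No step looks genuinely difficult. The main thing to be careful about is the bookkeeping of which coordinates can be telescoped through which iterations: at level $k$ one only obtains monotonicity along the iterates for coordinates $i\le k-1$, so the range on which anti-monotonicity of $H_k$ is applied must match exactly this set of indices. Beyond this small point of care, the argument is mechanical.
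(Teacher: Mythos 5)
Your proposal is correct and follows essentially the same route as the paper's proof: the ``if'' direction takes the constant execution $\xs{k}=\xx$, and the ``only if'' direction chains $\sigma_k(\xx)\geq\sigma_k(\xs{k})\geq \Hk(\xs{k-1})-\eps\geq \Hk(\xx)-\eps$, using exactly the monotonicity of the iterates' sorted components and the anti-monotonicity of $\Hk$ that the paper invokes inline. No gaps; the bookkeeping point you flag is handled the same way in the paper.
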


\begin{proof}
If \eqnref{eq:pr:poss-output-simplify:1} holds for all $k\in [n]$,
then, in running Algorithm~\ref{alg:iterative}, we can choose
$\xs{k}=\xx$ on every iteration, proving that
$\xx\in\algout{\eps}$.

Conversely, suppose $\xx\in\algout{\eps}$.
Let $\xs{k}$, for $k\in [n]$, be the sequence of iterates computed by
Algorithm~\ref{alg:iterative} resulting in the final output
$\xs{n}=\xx$.
Then, by the manner in which these are computed,
$\sigma_i(\xx)=\sigma_i(\xs{n})\geq\cdots\geq\sigma_i(\xs{i})$
for $i\in [n]$.
This implies, for $k\in [n]$, that
$\Hk(\xx)\leq \Hk(\xs{k-1})$ since if
$\sigma_i(\zz)\geq \sigma_i(\xx)$ then
$\sigma_i(\zz)\geq \sigma_i(\xs{k-1})$ for $i<k$.
Thus,
\[
  \sigma_k(\xx)
  \geq
  \sigma_k(\xs{k})
  \geq
  \Hk(\xs{k-1})-\eps
  \geq
  \Hk(\xx)-\eps,
\]
proving
\eqnref{eq:pr:poss-output-simplify:1}.
\end{proof}

Next, let $I\subsetneq [n]$ be a set of indices
(other than $[n]$), and
let us define the functions
$\fI:\Rn\rightarrow\R$ and $\GI:\Rn\rightarrow\Rext$ by
\[
  \fI(\xx)
  =
  \min_{i\in [n]\setminus I} x_i
\]
and
\begin{equation} \label{eqn:gi-defn}
  \GI(\xx)
  =
  \sup\braces{ \fI(\zz) :
                 \zz\in X,
                 z_i = x_i \mbox{ for } i\in I
             }
\end{equation}
for $\xx\in\Rn$.
Thus, $\fI(\xx)$ is equal to the minimum value of the components not
in $I$, and
$\GI(\xx)$ is the maximum value of $\fI(\zz)$ over all points
$\zz\in X$ that agree with $\xx$ on all components in $I$.
The function $\GI$ bears some resemblence to $\Hk$, but, as we will
see, is easier to work with since, with the set $I$ fixed, we avoid
the sorting functions $\sigma_i$.
The next proposition establishes that connection:

\begin{proposition}  \label{pr:hk-le-gi}
  Let $k\in [n]$ and
  let $I\subseteq [n]$ with $|I|=k-1$.
  Then
  $\Hk(\xx)\geq \GI(\xx)$
  for all $\xx\in\Rn$.
\end{proposition}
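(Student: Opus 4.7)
The plan is to show that every $\bz \in X$ feasible for the supremum defining $\GI(\bx)$ (i.e., satisfying $z_i = x_i$ for each $i \in I$) obeys $\Hk(\bx) \ge \fI(\bz)$; taking the supremum over $\bz$ then delivers the proposition. My strategy is to argue that such a $\bz$ is already a feasible candidate in the supremum defining $\Hk(\bx)$ and that $\sigma_k(\bz) \ge \fI(\bz)$, so that $\Hk(\bx) \ge \sigma_k(\bz) \ge \fI(\bz)$ drops out directly. In the subcase where $\bz$ itself fails to be $\Hk$-feasible, I would instead argue that $\fI(\bz)$ is forced to be small enough that it is dominated by $\sigma_k(\bx)$, which is itself a lower bound on $\Hk(\bx)$.

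The auxiliary bound $\sigma_k(\bz) \ge \fI(\bz)$ is a short pigeonhole observation valid for every $\bz$ and every $I$ with $|I| = k - 1$: the complement $[n] \setminus I$ contains $n - k + 1$ indices, so if $\fI(\bz)$ strictly exceeded $\sigma_k(\bz)$, every component of $\bz$ indexed outside $I$ would strictly exceed $\sigma_k(\bz)$, leaving only the $k - 1$ components inside $I$ to realize the at-least-$k$ entries of $\bz$ that must be $\le \sigma_k(\bz)$ --- a contradiction. I expect this step to be routine.

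The main obstacle is verifying $\Hk$-feasibility of $\bz$, i.e., the constraints $\sigma_i(\bz) \ge \sigma_i(\bx)$ for every $i < k$. I would handle this by contradiction combined with a counting argument. Assume $\sigma_i(\bz) < \sigma_i(\bx)$ for some $i < k$: then $\bz$ has at least $i$ entries strictly below $\sigma_i(\bx)$. Because $\bz$ and $\bx$ agree on every index in $I$, and $\bx$ itself has at most $i - 1$ entries strictly below its own $i$-th smallest value, at most $i - 1$ of these small entries of $\bz$ may lie at indices in $I$. Hence some $j \in [n] \setminus I$ must satisfy $z_j < \sigma_i(\bx)$, forcing $\fI(\bz) < \sigma_i(\bx) \le \sigma_{k-1}(\bx) \le \sigma_k(\bx)$. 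Together with the fact that $\bz = \bx$ is a valid candidate in the supremum defining $\Hk(\bx)$, which supplies $\Hk(\bx) \ge \sigma_k(\bx)$, this yields $\Hk(\bx) > \fI(\bz)$ in the non-$\Hk$-feasible case. In the $\Hk$-feasible case the direct chain $\Hk(\bx) \ge \sigma_k(\bz) \ge \fI(\bz)$ applies. Taking the supremum over $\bz$ completes the proof.
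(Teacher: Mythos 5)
Your proof is correct and takes essentially the same route as the paper's: the same reduction to showing $\fI(\zz)\le \Hk(\xx)$ for each $\zz\in X$ agreeing with $\xx$ on $I$, the same two cases depending on whether $\zz$ satisfies the constraints defining $\Hk(\xx)$, and the same pigeonhole facts ($\sigma_k(\zz)\ge \fI(\zz)$ since $|I|=k-1$, and in the infeasible case a small component of $\zz$ outside $I$, your counting version being just a rephrasing of the paper's argument via the index set $J$). The one caveat — shared with the paper's own proof — is that the step $\Hk(\xx)\ge\sigma_k(\xx)$ uses that $\xx$ is itself a feasible candidate, i.e.\ $\xx\in X$, which is the only setting in which the proposition is ever applied, even though the statement is phrased for all $\xx\in\Rn$.
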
  

\begin{proof}
Let $\xx\in \Rn$, and
let $\zz\in X$ be such that $z_i=x_i$ for $i\in I$.
We show, in cases,
that $\fI(\zz)\leq \Hk(\xx)$.
From $\GI$'s definition (\eqnref{eqn:gi-defn}), this will prove the
proposition.

Suppose, in the first case, that $\sigma_i(\zz)<\sigma_i(\xx)$ for
some $i\in I$.
There must exist a set of indices $J\subseteq [n]$ such that
$|J|=i$ and
\begin{equation}  \label{eq:pr:hk-le-gi:1}
  \sigma_i(\zz)=\max\{z_j : j\in J\}.
\end{equation}
We claim $J\not\subseteq I$.
Otherwise, if $J\subseteq I$, then we must have
\[
  \sigma_i(\zz)
  =
  \max\{x_j : j\in J\}
  \geq
  \sigma_i(\xx).
\]  
The equality is by \eqnref{eq:pr:hk-le-gi:1} and
since $z_j=x_j$ for $j\in I$.
The inequality is because $\sigma_i(\xx)$ is the
$i$-th smallest component of $\xx$ and $|J|=i$.
This contradicts that $\sigma_i(\zz)<\sigma_i(\xx)$.

Thus, there must exist $j\in J\setminus I$, implying
\[
  \fI(\zz)
  \leq
  z_j
  \leq
  \sigma_i(\zz)
  <
  \sigma_i(\xx)
  \leq
  \sigma_k(\xx)
  \leq
  \Hk(\xx).
\]
The first inequality is because $j\not\in I$;
the second is by \eqnref{eq:pr:hk-le-gi:1} and since $j\in J$;
the third is by assumption;
the fourth is because $i<k$;
and the last is by $\Hk$'s definition
(\eqnref{eqn:hk-defn}).

In the alternative case, $\sigma_i(\zz)\geq \sigma_i(\xx)$
for all $i\in I$, implying that $\zz$ satisfies the conditions
appearing in $\Hk$'s definition.
Then, similar to the preceding arguments, there must exist a set
$J\subseteq [n]$ with $|J|=k$ and such that
$\sigma_k(\zz) = \max\{z_j : j\in J\}$.
Since $|I|=k-1$, this implies that there must exist
$j\in J\setminus I$.
Thus,
$\fI(\zz) \leq z_j \leq \sigma_k(\zz) \leq \Hk(\xx)$
where the last inequality follows from $\Hk$'s definition.
\end{proof}

A function $f:\Rn\rightarrow\Rext$ is
\emph{lower semicontinuous relative to a set $S\subseteq\Rn$}
if for all $\xx\in S$ and for every sequence $\xx_t$ in $S$
with $\xx_t\rightarrow\xx$,
we have
\[ \liminf_{t\rightarrow\infty} f(\xx_t) \geq f(\xx). \]
Similarly,
the function is
\emph{upper semicontinuous relative to $S$}
if instead
$ \limsup_{t\rightarrow\infty} f(\xx_t) \leq f(\xx) $
whenever $\xx_t\rightarrow\xx$
(with $\xx_t\in S$).

We next prove useful properties of $\GI$ when $X$ is a convex
polytope:

\begin{lemma}  \label{lem:gi-conc-lsc}
  Let $I\subsetneq [n]$, and assume $X$ is convex and nonempty.
  Then $\GI$ is concave.
  If, in addition, $X$ is a convex polytope, then
  $\GI$ is lower semicontinuous relative to $X$.
\end{lemma}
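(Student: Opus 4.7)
My plan for concavity of $\GI$ on $\Rn$ is to combine the convexity of $X$ with the concavity of $\fI$ (which, as a pointwise minimum of coordinate functionals, is concave). Given $\xx,\yy\in\Rn$ with $\GI(\xx),\GI(\yy)$ finite and $\lambda\in[0,1]$, I would pick near-optimal witnesses $\zz_\xx,\zz_\yy\in X$ that agree with $\xx,\yy$ on the coordinates in $I$ and whose $\fI$-values approach $\GI(\xx),\GI(\yy)$ from below. Then $\zz:=\lambda\zz_\xx+(1-\lambda)\zz_\yy$ lies in $X$ by convexity, agrees with $\lambda\xx+(1-\lambda)\yy$ on the coordinates in $I$, and satisfies $\fI(\zz)\ge\lambda\fI(\zz_\xx)+(1-\lambda)\fI(\zz_\yy)$ by concavity of $\fI$. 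Hence $\zz$ is feasible in the supremum defining $\GI(\lambda\xx+(1-\lambda)\yy)$, and letting the witnesses approach optimality yields the concavity inequality. The extended-real edge cases ($\GI(\xx)$ or $\GI(\yy)$ equal to $\pm\infty$) either render the inequality vacuous or reduce to the same construction with an auxiliary parameter $M\to\infty$ standing in for one of the witness values.

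For the lower semicontinuity claim under the polytope hypothesis, my plan is to express $\GI$ as the value of a parametric linear program whose right-hand side depends affinely on $\xx$. Writing $X=\braces{\zz\in\Rn : A\zz\le b}$, the definition \eqnref{eqn:gi-defn} can be recast as
\begin{equation*}
    \GI(\xx)
    \;=\;
    \sup\braces{\, t\in\R :\
        A\zz\le b,\
        z_i=x_i \text{ for } i\in I,\
        z_j\ge t \text{ for } j\notin I \,},
\end{equation*}
an LP in the variables $(\zz,t)$. Standard parametric LP theory---equivalently, Rockafellar's theory of polyhedral concave functions---implies that the value function of such an LP is piecewise linear concave and continuous relative to its (polyhedral) effective domain. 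Every $\xx\in X$ lies in that effective domain, since $(\zz,t):=(\xx,\fI(\xx))$ is feasible and the objective is bounded above by $\norm{X}_\infty$ (because $X$ is a bounded polytope). Continuity relative to $X$ then gives lower semicontinuity relative to $X$ as a special case.

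The main obstacle is this second step: concave functions on arbitrary convex compact sets need not be continuous---or even lsc---on their boundary, and the polyhedral hypothesis on $X$ is precisely what rules this pathology out. If I wished to avoid invoking polyhedral function theory as a black box, my fallback plan is to use the vertex representation $X=\mathrm{conv}(V)$ for a finite set $V$, rewrite $\GI(\xx)$ as a maximum over $\lambda\in\Delta_{|V|}$ subject to finitely many affine equality constraints whose right-hand side is $\xx$ restricted to $I$, and then apply a Hoffman-type sensitivity bound to lift an optimal $\lambda^*$ for $\xx$ to a feasible $\lambda^t$ for a sequence $\xx_t\to\xx$ whose $\fI$-value approaches $\GI(\xx)$. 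Either route ultimately reduces the lemma to the well-known continuity of LP value functions on the polyhedral region where they are finite.
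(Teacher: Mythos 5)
Your proposal is correct, but it follows a different route from the paper on both halves of the lemma. For concavity, the paper does not argue via near-optimal witnesses; it writes $-\GI$ as the composition $g\circ P$, where $P$ projects onto the coordinates in $I$ and $g$ is the image of the convex function $-\fI+\indX$ under $P$, and then invokes the general fact that images of convex functions under linear maps are convex \citep[Theorem~5.7]{ROC}. Your elementary argument (combine witnesses $\zz_\xx,\zz_\yy$ convexly, use concavity of $\fI$, let the slack go to zero, and treat the infinite values via the hypograph) is a perfectly sound and more self-contained substitute. For lower semicontinuity, the paper stays coordinate-free: it notes that a polytope is locally simplicial and applies Rockafellar's Theorems~10.2 and~20.5 directly to the convex function $-\GI$, while you pass to an H-representation $X=\{\zz: A\zz\le b\}$, recast $\GI$ as the value of a parametric LP in $(\zz,t)$ with right-hand side affine in $\xx$, and invoke continuity of polyhedral value functions on their polyhedral effective domain (your identification of the value function with $\GI$, the feasibility witness $(\xx,\fI(\xx))$, and the boundedness of the objective are all checked correctly; the attainment of the supremum needed to identify the hypograph with a projected polyhedron is immediate from compactness of $X$). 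The two lsc arguments rest on essentially the same underlying fact---boundary regularity of polyhedral concave functions---so yours is a repackaging rather than an avoidance of that machinery; what the paper's abstract formulation buys is that the same proof extends verbatim to convex, compact, \emph{locally simplicial} sets that are not polytopes (a generalization the paper explicitly notes after Theorem~\ref{thm:convex}), whereas the LP/H-representation route is tied to polyhedrality. What your route buys is concreteness: an explicit piecewise-linear description of $\GI$ and a quantitative fallback via Hoffman-type sensitivity bounds, which the paper's argument does not provide.
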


\begin{proof}
Let $k=|I|$ and suppose $I=\{i_1,\ldots,i_k\}$.
Let $P:\Rn\rightarrow\Rk$ be the linear mapping that projects a point
$\xx$ onto just the coordinates in $I$
(so that $[P(\xx)]_j=x_{i_j}$ for $\xx\in\Rn$ and $j\in [k]$).
Let $\indX$ be the indicator function for $X$ so that
$\indX(\xx)$ is $0$ if $\xx\in X$ and is $+\infty$ otherwise.
Let $g:\Rk\rightarrow\Rext$ be defined by
\[
  g(\yy)
  =
  \inf\braces{ -\fI(\zz) + \indX(\zz) : \zz\in\Rn, P(\zz) = \yy }
\]
for $\yy\in \Rk$.
Note that $-\fI$ is convex, being the pointwise maximum of linear (and
so convex) functions, and $\indX$ is also convex since $X$ is.
Therefore $-\fI + \indX$ is convex.
It follows that $g$ is convex, being the so-called image of
$-\fI+\indX$ under $P$.
Note further that $-\GI(\xx)=g(P(\xx))$ for $\xx\in\Rn$, so
$-\GI$ is
convex as well, proving $\GI$ is concave.
(In showing $g$ and $-\GI$ are convex, we applied general facts given
in \citet[Theorem~5.7]{ROC}.)

If $\xx\in X$, then, from $\GI$'s definition,
$-\GI(\xx)\leq -\fI(\xx)<+\infty$;
thus, $X$ is included in $-\GI$'s effective domain (set of points
where it is not $+\infty$).
If $X$ is a convex polytope, then it is also
\emph{locally simplicial}, and, being included in $-\GI$'s effective
domain, it then follows that $-\GI$ is upper
semicontinuous relative to $X$
\citep[Theorems~10.2 and~20.5]{ROC}.
Therefore, $\GI$ is lower semicontinuous relative to $X$.
\end{proof}

\begin{proof}[Proof of Theorem \ref{thm:convex}]
Let $X\subseteq \Rn$ be a convex polytope.
Let $\xopt$ be the unique lexicographic maximum
(which exists by Theorem~\ref{thm:lexopt-unique}).
Suppose by way of contradiction that $X$ is not lexicographically
stable.
Then there exists $\delta>0$ such that for all $\eps>0$
there exists $\xhat\in\algout{\eps}$ with
$d\barpair{\bx^*}{\hbx} \geq \delta$, implying
that
$\sigma_{k}(\xhat)\leq \sigma_{k}(\xopt) - \delta$
for some $k\in [n]$.
Since there are only finitely many values in $[n]$, this in turn
implies that
there exists $k_0\in [n]$, $\delta>0$,
a sequence $\eps_t>0$ with $\eps_t\rightarrow 0$,
and
a sequence $\xx_t\in\algout{\eps_t}$
such that
$\sigma_{k_0}(\xx_t)\leq \sigma_{k_0}(\xopt) - \delta$
for infinitely many $t$.
By discarding all other elements from the sequence, we assume
henceforth that this holds for all $t$.

Since $X$ is a convex polytope, it is also compact.
Therefore, the sequence $\xx_t$ must have a subsequence converging to
some point $\xhat\in X$.
By discarding all other elements, we can assume the entire sequence
converges so that $\xx_t\rightarrow\xhat$.
Further, by continuity
(Theorem~\ref{thm:continuitysorting}),
$\sigma_{k_0}(\xhat) \leq \sigma_{k_0}(\xopt) - \delta$.
In particular, this shows that $\xhat\neq \xopt$.

By possibly permuting the components of points in $X$, we assume
without loss of generality that
the components are sorted according to the components of $\xhat$, and,
when there are ties, according to the components of $\xopt$.
That is, we assume the indices have been permuted in such a way
that for all $i,j\in [n]$, if $i\leq j$ then
$\hatx_i \leq \hatx_j$, and in addition,
if $\hatx_i = \hatx_j$ then $\xopti_i \leq \xopti_j$.
In particular, this implies
$\hatx_1\leq\cdots\leq\hatx_n$,
and $\sigma_i(\xhat)=\hatx_i$ for $i\in [n]$.

Let $k$ be the smallest index on which $\xhat$ and $\xopt$ differ
(so that $\hatx_i=\xopti_i$ for $i<k$ and $\hatx_k\neq\xopti_k$).
Let $I=[k-1]$.

\begin{claimpx}  \label{cl:thm:convex:1}
  $\sigma_i(\xopt)=\sigma_i(\xhat)$ for $i\in I$.
\end{claimpx}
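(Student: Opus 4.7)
The plan is to argue directly from the sorting convention that has been imposed on the indices together with the lex-maximum property of $\xopt$. By that convention, $\hatx_1\le\cdots\le\hatx_n$, so $\sigma_i(\xhat)=\hatx_i$ for every $i$; also, by the choice of $k$, $\hatx_j=\xopti_j$ for every $j<k$. Note, however, that $\xopt$ itself is not assumed to be sorted, so $\sigma_i(\xopt)$ will have to be controlled indirectly rather than read off as $\xopti_i$.

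The first step is to obtain an upper bound on $\sigma_i(\xopt)$ for $i\in I$. For such an $i$, the coordinates $\xopti_1,\ldots,\xopti_i$ equal $\hatx_1,\ldots,\hatx_i$ respectively, each of which is at most $\hatx_i$ because $\xhat$ is sorted nondecreasingly. This exhibits $i$ components of $\xopt$ that are all bounded above by $\hatx_i$, which gives $\sigma_i(\xopt)\le \hatx_i=\sigma_i(\xhat)$.

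The second step combines this upper bound with the fact that $\xopt$ is a lexicographic maximum and $\xhat\in X$, so $\xopt\ge_\sigma\xhat$. Either the sorted profiles of $\xopt$ and $\xhat$ coincide on all of $[n]$, in which case $\sigma_i(\xopt)=\sigma_i(\xhat)$ for every $i$ and the claim is immediate; or there is a smallest index $i^*$ at which $\sigma_{i^*}(\xopt)\ne\sigma_{i^*}(\xhat)$, and then $\sigma_{i^*}(\xopt)>\sigma_{i^*}(\xhat)$ with agreement for smaller indices. In the latter case, if $i^*<k$, then $\sigma_{i^*}(\xopt)>\hatx_{i^*}$ would contradict the upper bound $\sigma_{i^*}(\xopt)\le\hatx_{i^*}$ from the first step. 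Hence $i^*\ge k$, and in either case $\sigma_i(\xopt)=\sigma_i(\xhat)$ for every $i\in I=[k-1]$.

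I do not anticipate any real obstacle here; the claim is essentially bookkeeping given how the components have been permuted. The one point to be careful about is that the permutation sorts $\xhat$ but not $\xopt$, so the bound $\sigma_i(\xopt)\le\hatx_i$ must be derived from the pigeonhole-style argument above rather than by equating $\sigma_i(\xopt)$ with the unsorted coordinate $\xopti_i$.
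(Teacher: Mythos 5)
Your proposal is correct and in substance matches the paper's argument: both rest on the pigeonhole bound $\sigma_i(\xopt)\le\max\{\xopti_1,\ldots,\xopti_i\}=\hatx_i=\sigma_i(\xhat)$ for $i<k$ (using $\xopti_j=\hatx_j$ for $j<k$ and the sortedness of $\xhat$), combined with lexicographic maximality of $\xopt$ to forbid $\sigma_i(\xopt)<\sigma_i(\xhat)$ at the first index of disagreement. The paper merely phrases this as an induction on $m\le k-1$ rather than as your first-disagreement-index argument, a cosmetic difference.
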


\begin{proofx}
We prove, by induction on $m=0,1,\ldots,k-1$, that
$\sigma_i(\xopt)=\sigma_i(\xhat)$ for $i\leq m$.
This holds vacuously in the base case that $m=0$.
Suppose $m\in [k-1]$ and that the claim holds for $m-1$.
Then
$\sigma_i(\xopt)=\sigma_i(\xhat)$ for $i\leq m-1$.
Therefore, $\sigma_m(\xopt)\geq\sigma_m(\xhat)$
since $\xopt$ is lexicographically optimal.
On the other hand, since $\sigma_m(\xopt)$ is the $m$-th smallest
component of $\xopt$,
$\sigma_m(\xopt)\leq \max\{\xopti_1,\ldots,\xopti_m\}= \max\{\hatx_1,\ldots,\hatx_m\}=\hatx_m=\sigma_m(\xhat)$.
This completes the induction.
\end{proofx}

By Claim~\ref{cl:thm:convex:1},
$\sigma_i(\xopt)=\sigma_i(\xhat)=\hatx_i=\xopti_i$ for
$i\in I$.
Therefore, $\sigma_k(\xopt)$ is the smallest of the remaining
components of $\xopt$.
Thus,
\begin{equation}  \label{eq:thm:convex:1}
  \xopti_k
  \geq
  \fI(\xopt)
  =
  \sigma_k(\xopt)
  \geq
  \sigma_k(\xhat)
  =
  \hatx_k
\end{equation}
where the second inequality follows from
Claim~\ref{cl:thm:convex:1}
since $\xopt$ is lexicographically maximal.
Since we assumed $\xopti_k\neq\hatx_k$, we must have
$\xopti_k > \hatx_k$.

\begin{claimpx}  \label{cl:thm:convex:2}
  $\fI(\xopt) > \hatx_k$.
\end{claimpx}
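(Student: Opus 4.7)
The plan is to argue by contradiction. Equation~\eqref{eq:thm:convex:1} already gives $\fI(\xopt)\geq\hatx_k$, so the only way the desired strict inequality can fail is if $\fI(\xopt)=\hatx_k$. Assuming that, I will construct a point of $X$ that strictly beats $\xopt$ in the lexicographic order, contradicting $\xopt\in\lexmax X$. The candidate is the midpoint $\zz=(\xopt+\xhat)/2$, which lies in $X$ by convexity.

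Write $v=\hatx_k$. The contradictory assumption $\fI(\xopt)=v$ means $\min_{i\geq k}\xopti_i=v$, so in particular $\xopti_i\geq v$ for every $i\geq k$, and the minimum is attained at some index $j\geq k$ with $\xopti_j=v$. Since $\xopti_k>v$ (as noted immediately before the claim), this witness must in fact satisfy $j>k$. I would then invoke the sorting convention fixed at the start of the theorem's proof: if we had $\hatx_j=\hatx_k=v$ with $k<j$, the tie-breaking rule would force $\xopti_j\geq\xopti_k>v$, contradicting $\xopti_j=v$. Hence every index $j\geq k$ with $\xopti_j=v$ must satisfy $\hatx_j>v$.

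With these facts, the analysis of $\zz$ is straightforward. For $i<k$, $\xopti_i=\hatx_i$ gives $z_i=\hatx_i\leq v$. For $i\geq k$, either $\xopti_i>v$, in which case $\hatx_i\geq v$ yields $z_i>v$, or $\xopti_i=v$, in which case the previous paragraph gives $\hatx_i>v$, and again $z_i>v$. Consequently the $k-1$ smallest components of $\zz$ are exactly $\hatx_1,\ldots,\hatx_{k-1}$, which by Claim~\ref{cl:thm:convex:1} coincide with $\sigma_i(\xopt)$ for $i<k$, while $\sigma_k(\zz)>v=\sigma_k(\xopt)$. Thus $\zz$ strictly dominates $\xopt$ in the lexicographic order, delivering the contradiction.

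The delicate step is the tie-breaking appeal: absent the indexing convention, a witness $j>k$ could have $\hatx_j=\hatx_k=v$, which would leave $z_j=v$ and break the strict inequality $\sigma_k(\zz)>\sigma_k(\xopt)$. Sorting first by the components of $\xhat$ and then breaking ties using the components of $\xopt$ is precisely the structural ingredient that rules out this degenerate configuration and makes the midpoint improvement strict on every coordinate $i\geq k$.
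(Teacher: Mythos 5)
Your proof is correct and follows essentially the same route as the paper's: assume $\fI(\xopt)=\hatx_k$, take the midpoint $(\xopt+\xhat)/2\in X$, and use the tie-breaking convention in the sorting of indices to show every component outside $I$ strictly exceeds $\hatx_k$, contradicting lexicographic maximality of $\xopt$. The only cosmetic difference is that you case-split on $\xopti_i=\hatx_k$ versus $\xopti_i>\hatx_k$, while the paper splits on $\hatx_i=\hatx_k$ versus $\hatx_i>\hatx_k$; the underlying argument is the same.
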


\begin{proofx}
The proof is very similar to the proof of
Theorem~\ref{thm:lexopt-unique}.
Suppose by way of contradiction that the claim is false.
Then, in light of \eqnref{eq:thm:convex:1}, we must have
$\sigma_k(\xopt)=\fI(\xopt)=\hatx_k$.
Let $\yy=(\xopt+\xhat)/2$,
which is in $X$ since $X$ is convex.
We compare the components of $\yy$ to $\hatx_k$.
Let $i\in [n]$.

If $i<k$ then $y_i = \hatx_i \leq \hatx_k$
(since $\hatx_i=\xopti_i$).

If $i=k$ then $y_k = (\xopti_k+\hatx_k)/2 >\hatx_k$
since $\xopti_k>\hatx_k$.

Finally, suppose $i>k$, implying $\hatx_i\geq\hatx_k$.
If $\hatx_i>\hatx_k$ then
$y_i=(\xopti_i+\hatx_i)/2>\hatx_k$ since
$\xopti_i\geq\hatx_k$
(by \eqnref{eq:thm:convex:1}).
Otherwise, $\hatx_i=\hatx_k$, implying, by how the components are
sorted, that
$\xopti_i\geq\xopti_k>\hatx_k$;
thus, again,
$y_i=(\xopti_i+\hatx_i)/2>\hatx_k$.

To summarize, $y_i=\xopti_i\leq \hatx_k$ if $i\in I$,
and $y_i>\hatx_k$ if $i\not\in I$.
It follows that
$\sigma_i(\yy)=\sigma_i(\xopt)=\xopti_i$ for $i=1,\ldots,k-1$,
and that
$\sigma_k(\yy)=\fI(\yy)>\hatx_k=\sigma_k(\xopt)$.
However, this contradicts that $\xopt$ is lexicographically maximal.
\end{proofx}

Combining, we now have
\[
  \hatx_k
  <
  \fI(\xopt)
  \leq
  \GI(\xhat)
  \leq
  \liminf_{t\rightarrow\infty} \GI(\xx_t)
  \leq
  \liminf_{t\rightarrow\infty} \Hk(\xx_t)
  \leq
  \liminf_{t\rightarrow\infty} [\sigma_k(\xx_t) + \eps_t]
  =
  \hatx_k.
\]
The first inequality is by Claim~\ref{cl:thm:convex:2}.
The second is from $\GI$'s definition
(\eqnref{eqn:gi-defn}).
The third is because $\GI$ is lower semicontinuous
relative to $X$
(Lemma~\ref{lem:gi-conc-lsc}).
The fourth is by
Proposition~\ref{pr:hk-le-gi}.
The fifth is by Proposition~\ref{pr:poss-output-simplify},
since
$\xx_t\in\algout{\eps_t}$.
The equality is because $\eps_t\rightarrow 0$ and $\xx_t\rightarrow\xhat$,
implying $\sigma_k(\xx_t)\rightarrow\sigma_k(\xhat)=\hatx_k$
(using Theorem~\ref{thm:continuitysorting}).

Having reached a contradiction, we conclude that $X$ is
lexicographically stable.
\end{proof}

\subsection{A compact, convex set that is not lexicographically stable}
\label{sec:convex-not-stable}

Let $X$ be as in \eqnref{eqn:compact-X-not-lex-stab}, which is convex and compact.
We show that $X$ is not lexicographically stable.

By Theorem~\ref{thm:lexopt-unique}, $X$ has a unique lexicographic
maximum $\xopt$.
We first argue that $\xopt=\transvec{0,0,1}$.
Note first that for all $\xx\in X$,
$x_1 \leq 0 \leq \min\{x_2,x_3\}$, implying
$\sigma_1(\xx)=x_1$.
In particular, since $\sigma_1(\xopt)$ maximizes $\sigma_1(\xx)$ over
$\xx\in X$, this means we must have $\xopti_1=0$.
By $X$'s definition, this implies $\xopti_2=0$.
We can then choose $\xopti_3=1$, since this is that component's
largest possible value.

Next, we argue that, for $\eps\in (0,1)$,
$\xeps=\transvec{-\eps^2,\eps/2,3/4}\in\algout{\eps}$,
and more specifically that $\xeps$ can be chosen for $\xs{k}$ on each
round $k$ of Algorithm~\ref{alg:iterative}.

On round~1,
as noted already, if $\xx\in X$ then $\sigma_1(\xx)=x_1\leq 0$.
Therefore,
$\sigma_1(\xeps) = -\eps^2$ is within $\eps$ of maximizing
$\sigma_1(\xx)$ over $\xx\in X$, so we can choose
$\xs{1}=\xeps$.

For round 2, suppose $\xx\in X_1(\xeps)$, where
$X_k(\xx)$ is as defined in
\eqnref{eq:constraints}.
That is, $x_1 = \sigma_1(\xx) \geq -\eps^2$.
Then
$x_2^2 \leq (1-x_3)(-x_1) \leq \eps^2$,
so $\sigma_2(\xx)\leq x_2\leq \eps$.
Therefore, $\sigma_2(\xeps) = \eps/2$ is within $\eps$ of maximizing
$\sigma_2(\xx)$ over $\xx\in X_1(\eps)$, so we can choose
$\xs{2}=\xeps$.

Finally, for round~3, suppose $\xx\in X_2(\xeps)$, implying
$x_1=\sigma_1(\xx)\geq -\eps^2$ and
$x_2 \geq \sigma_2(\xx)\geq \eps/2$.
These imply
\[
   \eps^2 (1-x_3)
   \geq
   (-x_1) (1-x_3)
   \geq
   x_2^2
   \geq
   (\eps/2)^2,
\]
and consequently that $x_3 \leq 3/4$.
Thus, $\sigma_3(\xeps) = 3/4$ maximizes $\sigma_3(\xx)$ over
$\xx\in X_2(\xeps)$.

We conclude that $\xeps\in\algout{\eps}$.
Since $\sigma_3(\xeps)=3/4$ for all $\eps\in (0,1)$
but $\sigma_3(\xopt)=1$
(implying $d\barpair{\xopt}{\xeps} \geq 1/4$),
$X$ is not lexicographically stable.

\subsection{Proof of Lemma~\ref{thm:helper}}
\label{sec:lem-helper}

\begin{proof} Fix $k \in [n]$ and choose any $\bx \in X_{k-1}(\bx_{c, \gamma})$.  By the definition of $\bx_{c, \gamma}$, the definition of $L_c$, and rearranging terms
\begin{align}
\nonumber & -\gamma \exp(-c\norm{X}_\infty) \le L_c(\bx) - L_c(\bx_{c, \gamma}) \\
= & \sum_{i = 1}^n \exp(-c\sigma_i(\bx)) - \exp(-c\sigma_i(\bx_{c, \gamma})). \label{eq:helper:one}
\end{align}
Since $\bx \in X_{k-1}(\bx_{c, \gamma})$ and the function $x \mapsto \exp(-x)$ is decreasing
\begin{align}
& \sum_{i = 1}^n \exp(-c\sigma_i(\bx)) - \exp(-c\sigma_i(\bx_{c, \gamma})) \nonumber \\
\le & \sum_{i = k}^n \exp(-c\sigma_i(\bx)) - \exp(-c\sigma_i(\bx_{c, \gamma})). \label{eq:helper:two}
\end{align}
Since the function $x \mapsto \exp(-x)$ is decreasing and positive
\begin{align}
& \sum_{i = k}^n \exp(-c\sigma_i(\bx)) - \exp(-c\sigma_i(\bx_{c, \gamma})) \nonumber \\
\le & (n - k + 1) \exp(-c\sigma_k(\bx)) - \exp(-c\sigma_k(\bx_{c, \gamma})). \label{eq:helper:three}
\end{align}
Combining \eqref{eq:helper:one}, \eqref{eq:helper:two}, and \eqref{eq:helper:three} and dividing through by $\exp(-c\sigma_k(\bx_{c, \gamma}))$ yields 
\begin{align*}
    & \frac{-\gamma\exp(-c\norm{X}_\infty)}{\exp(-c\sigma_k(\bx_{c, \gamma}))} \\
    \le & (n - k + 1)\exp(-c(\sigma_k(\bx) - \sigma_k(\bx_{c, \gamma}))) - 1.    
\end{align*}

Since $\sigma_k(\bx_{c, \gamma}) \le \norm{X}_\infty$ we have $\exp(-c\norm{X}_\infty) \le \exp(-c\sigma_k(\bx_{c, \gamma}))$ and therefore
\[
-\gamma \le (n - k + 1)\exp(-c(\sigma_k(\bx) - \sigma_k(\bx_{c, \gamma}))) - 1.
\]
By rearranging we have
\[
\sigma_k(\bx_{c, \gamma}) \ge \sigma_k(\bx)  - \frac1c \log\left(\frac{n - k + 1}{1 - \gamma}\right).
\]
Because $\bx \in X_{k-1}(\bx_{c, \gamma})$ was chosen arbitrarily this implies
\[
\sigma_k(\bx_{c, \gamma}) \ge \sup_{\bx \in X_{k-1}(\bx_{c, \gamma})} \sigma_k(\bx) - \frac1c \log\left(\frac{n - k + 1}{1 - \gamma}\right).
\] 
\end{proof}

\subsection{Proof of Theorem \ref{thm:sharp_lower}}
\label{sec:thm:sharp-lower}

Define $\bx^*, \bx', \bx'' \in \bbR^n$ as
\begin{align*}
    \bx^* &= \left(0, \ldots, 0, 1\right)^\top,\\
    \bx' &= \left(0, \ldots, 0, \frac12\right)^\top,\\
    \bx'' &= \left(-\frac12, \frac14, \ldots, \frac14, \frac12\right)^\top
\end{align*}
and let $X = \textrm{conv}(\{\bx^*, \bx'\}) \cup \textrm{conv}(\{\bx', \bx''\})$, where $\textrm{conv}(S)$ denotes the convex hull of $S \subseteq \bbR^n$. Clearly $\norm{X}_\infty = 1$. For all $\lambda \in [0, 1]$ let
\begin{align*}
    \bx(\lambda) &= \lambda \bx^* + (1 - \lambda) \bx',\\
    \bx'(\lambda) &= \lambda \bx' + (1 - \lambda) \bx''
\end{align*}
Since for all $\lambda \in [0, 1)$ 
\begin{align*}
    \sigma_k(\bx^*) &= \sigma_k(\bx(\lambda)) \textrm{ for } k \in [n-1] \textrm{ and }\sigma_n(\bx^*) > \sigma_n(\bx(\lambda)),\\
    \sigma_1(\bx^*) &> \sigma_1(\bx'(\lambda)).
\end{align*}
we have $\lexmax X = \{\bx^*\}$. We also know that $\bx_{c, 0}$ must exist, as it is defined to be the minimum of a continuous function on a compact set. Since for all $\lambda \in [0, 1]$
\begin{align*}
L_c(\bx^*) &\le L_c(\bx(\lambda))\\
\sigma_n(\bx'(\lambda)) &= \sigma_n(\bx^*) - \frac12
\end{align*}
it remains to show that for all $c \ge 2$ there exists $\lambda_c \in [0, 1]$ such that $L_c(\bx'(\lambda_c)) < L_c(\bx^*)$. Let $\lambda_c = 1 - \frac2c$. We have
\begin{align*}
L_c(\bx'(\lambda_c)) - L_c(\bx^*) &= \sum_{k=1}^n \exp(-c\sigma_k(\bx'(\lambda_c))) - \exp(-c\sigma_k(\bx^*))\\
&= \exp(1) - 1\\
&~~~~ + (n - 2)\exp\left(-\frac12\right) - (n - 2)\\
&~~~~ + \exp\left(-\frac{c}{2}\right) - \exp(-c)\\
&\le \exp(1) - 1\\
&~~~~ + (n - 2)\exp\left(-\frac12\right) - (n - 2)\\
&~~~~ + \exp\left(-1\right) & \because c \ge 2 \textrm{ and } \exp(-c) \ge 0\\
&< 0 & \because n \ge 8
\end{align*}

\subsection{Proof of Theorem \ref{thm:linesegmentlowerbound}}
\label{sec:linesegmentlowerbound}

Let $\beta = \frac23$. Choose $\eps \in (0, \frac18]$ such that $a = \frac1\beta \log \frac{2\beta - 1 - \eps}{\eps}$. This is feasible because the function $f(\eps) = \frac1\beta \log \frac{2\beta - 1 - \eps}{\eps}$ is decreasing and continuous on the interval $(0, \frac18]$, $\lim_{\eps \goes 0} f(\eps) = \infty$, $f(\frac18) < 1$ and $a \ge 1$. Note that $\eps < 2\beta - 1 < \beta$. Let
\begin{align*}
    \bx^* &= \bigg(\eps, \ldots, \eps, \hspace*{-0.35cm}\overset{\substack{k\textrm{th term}\\\downarrow}}{1}\hspace*{-0.35cm}, \ldots, 1\bigg)^\top \in \bbR^n,\\
    \bx' &= \left(0, \eps, \ldots, \eps, \beta, \hspace*{-0.35cm}\underset{\substack{\uparrow\\k\textrm{th term}}}{\beta}\hspace*{-0.35cm}, 1, \ldots, 1\right)^\top \in \bbR^n
\end{align*}
and $X = \textrm{conv}(\{\bx^*, \bx'\})$, where $\textrm{conv}(S)$ denotes the convex hull of $S \subseteq \bbR^n$. Clearly $\norm{X}_\infty = 1$. Let $\bx(\lambda) = \lambda \bx^* + (1 - \lambda) \bx'$ for $\lambda \in [0, 1]$. Observe that for $\lambda \in [0, 1)$
\[
\sigma_1(\bx^*) = \sigma_1(\bx(1)) = \eps > \lambda \eps = \sigma_1(\bx(\lambda))
\]
and thus $\lexmax X = \{\bx^*\}$. We also know that $\bx_{c, 0}$ must exist and is unique, as it is defined to be the minimum of a strictly convex function on a line segment. 

We have
\begin{align*}
\sigma_1(\bx(\lambda)) &= \lambda \epsilon,\\
\sigma_{k-1}(\bx(\lambda)) &= \lambda \epsilon + (1 - \lambda)\beta,\\
\sigma_k(\bx(\lambda)) &= \lambda + (1 - \lambda)\beta .
\end{align*}
Therefore
\begin{align*}
L_c(\bx(\lambda)) &= \sum_{i = 1}^n \exp(-c\sigma_i(\bx(\lambda)))\\
&= \exp(-c\lambda\epsilon) + (k-3)\exp(-c\eps) + \exp\left(- c\lambda\epsilon - c (1 - \lambda)\beta \right)\\
&~~~~~ + \exp\left(-c\lambda - c(1 - \lambda)\beta\right) + (n - k) \exp(-c).
\end{align*}
Let $\ell(\lambda) = L_c(\bx(\lambda))$ for $\lambda \in [0, 1]$. We have
\begin{align*}
\ell'(\lambda) &=  -\exp(-c\lambda\epsilon)c\epsilon - \exp\left(-c\lambda\epsilon - c(1 - \lambda)\beta\right)\left(c\epsilon - c\beta\right)\\
&~~~~~ -\exp\left(-c\lambda - c(1 - \lambda)\beta\right)\left(c - c\beta\right)
\end{align*}
and
\begin{align*}
\ell''(\lambda) &=  \exp(-c\lambda\epsilon)(c\epsilon)^2 + \exp\left(-c\lambda\epsilon - c(1 - \lambda)\beta\right)\left(c\epsilon - c\beta\right)^2\notag\\
&~~~~~ +\exp\left(-c\lambda - c(1 - \lambda)\beta\right)\left(c - c\beta\right)^2.
\end{align*}
We now divide into two cases, $c \le a$ and $c > a$. First suppose $c \le a$. Since $\ell''(\lambda) \ge 0$ for all $\lambda \in [0, 1]$, we know that $\ell'(\lambda)$ is minimized at $\lambda = 0$. We have
\begin{align*}
    \ell'(0) \ge 0
    &\Leftrightarrow -c\eps - \exp\left(-c\beta\right)\left(c\epsilon - c\beta\right) - \exp(-c\beta)(c - c\beta) \ge 0\\
    &\Leftrightarrow \exp\left(-c\beta\right)\left(2c\beta - c - c\epsilon\right) \ge c\epsilon\\
    &\Leftrightarrow \exp\left(-c\beta\right) \ge \frac{\epsilon}{2\beta - 1 - \epsilon}\\
    &\Leftrightarrow c \le \frac{1}{\beta} \log \frac{2\beta - 1 - \eps}{\eps}\\
    &\Leftrightarrow c \le a.
\end{align*}
Thus if $c \le a$ then $\ell'(\lambda) \ge 0$ for all $\lambda \in [0, 1]$, which implies that $\ell(\lambda)$ is minimized at $\lambda = 0$. In other words, $\bx_{c,0} = \bx'$, and therefore
\begin{equation}
\sigma_k(\bx_{c,0}) - \sigma_k(\bx^*) = \sigma_k(\bx') - \sigma_k(\bx^*) = \beta - 1. \label{eq:bound1}
\end{equation}

Now suppose $c > a$, which by the above derivation implies that $\ell'(0) < 0$. We also have $\ell'(1) > 0$, since
\begin{align}
\ell'(1) > 0 &\Leftrightarrow -\exp(-c\eps)c\eps - \exp(-c\eps)(c\eps - c\beta) - \exp(-c)(c - c\beta) > 0 \notag\\
&\Leftrightarrow -c\eps - (c\eps - c\beta) - \exp(-c(1 - \eps))(c - c\beta) > 0 \label{eq:deriv-1}\\
&\Leftrightarrow \frac{\beta - 2\eps}{1 - \beta} > \exp(-c(1 - \eps)) \notag\\
&\Leftrightarrow c > \frac{1}{1 - \eps} \log \frac{1 - \beta}{\beta - 2\eps} \notag
\end{align}
and
\begin{equation}
\frac{1}{1 - \eps} \log \frac{1 - \beta}{\beta - 2\eps} \le \frac87 \log \frac{\frac13}{\frac23 - \frac14} < 0 < a < c\label{eq:deriv0}
\end{equation}
where in Eq.~\eqref{eq:deriv-1} we divided through by $\exp(-c\eps)$ and in Eq.~\eqref{eq:deriv0} we used $\beta = \frac23$ and $\eps \le \frac18$. Therefore $\ell'(\lambda) = 0$ for some $\lambda \in (0, 1)$, which must also be the minimizer of $\ell(\lambda)$. We have
\begin{align}
\ell'(\lambda) = 0 & \Leftrightarrow -\exp(-c\lambda\epsilon)c\epsilon - \exp\left(-c\lambda\epsilon - c(1 - \lambda)\beta\right)\left(c\epsilon - c\beta\right) \notag\\
&~~~~~~ -\exp\left(-c\lambda - c(1 - \lambda)\beta\right)\left(c - c\beta\right) = 0 \notag\\
& \Leftrightarrow -c\epsilon - \exp\left(- c(1 - \lambda)\beta\right)\left(c\epsilon - c\beta\right) -\exp\left(-c(1 - \lambda)\beta\right)\left(c - c\beta\right)z = 0 \label{eq:deriv1}\\
&\Leftrightarrow \exp(-c(1 - \lambda)\beta)\big(c\beta - c\eps - (c - c\beta)z\big) = c\eps \notag\\
&\Leftrightarrow \exp(-c(1 - \lambda)\beta) = \frac{\eps}{\beta - \eps - (1 - \beta)z} \notag\\
&\Leftrightarrow \lambda = 1 - \frac{1}{c\beta} \log \frac{\beta - \epsilon - (1 - \beta)z}{\epsilon}. \label{eq:deriv2}
\end{align}
where in Eq.~\eqref{eq:deriv1} we divided through by $\exp(-c\lambda\eps)$ and let $z = \exp(-c\lambda(1 - \eps))$.
Thus $\bx_{c,0} = \bx(\lambda_c)$, where $\lambda_c$ is the value of $\lambda$ that satisfies Eq.~\eqref{eq:deriv2}. We have
\begin{align}
\sigma_k(\bx_{c,0}) - \sigma_k(\bx^*) &= \sigma_k(\bx(\lambda_c)) - \sigma_k(\bx^*) \notag\\ 
&= \lambda_c + \beta (1 - \lambda_c) - 1 \notag\\
&= 1 - \frac{1}{c\beta} \log \frac{\beta - \epsilon - (1 - \beta)z}{\epsilon} + \frac{1}{c} \log \frac{\beta - \epsilon - (1 - \beta)z}{\epsilon} - 1\notag\\
&= -\left(\frac{1}{\beta} - 1\right) \frac1c \log \frac{\beta - \epsilon - (1 - \beta)z}{\epsilon}\notag\\
&\le -\left(\frac{1}{\beta} - 1\right) \frac1c \log \frac{2\beta  - 1 - \epsilon}{\epsilon} \label{eq:deriv3}\\
&= \left(\beta - 1\right) \frac{1}{c\beta} \log \frac{2\beta  - 1 - \epsilon}{\epsilon}\notag\\
&= \left(\beta - 1\right) \frac{a}{c} \label{eq:bound2}
\end{align}
where in Eq.~\eqref{eq:deriv3} we used $z \le 1$. Combining Eq.~\eqref{eq:bound1} and \eqref{eq:bound2} we have
\[
\sigma_k(\bx_{c,0}) \le \sigma_k(\bx^*) + \max\left\{\beta - 1, (\beta - 1)\frac{a}{c}\right\} = \sigma_k(\bx^*) - \frac13\min\left\{1, \frac{a}{c}\right\} 
\]
where we used $\beta = \frac23$.

\section{Comparison to \citet{hartman2023leximin}}
\label{sec:hartman}

In this paper we consider $\bx \in X$ to be $\eps$-close to a lexicographic maximum of $X$ if $\sigma_i(\bx) \ge \sigma_i(\bx^*) - \eps$ for all $i \in [n]$ and $\bx^* \in \lexmax X$ (see Definition \ref{defn:distort}). However, \citeauthor{hartman2023leximin} take a different approach. For any $\bx, \by \in X$ they say that $\by \succ_\eps \bx$ if there exists $k \in [n]$ such that
\begin{align*}
\sigma_i(\by) &\ge \sigma_i(\bx) ~~\textrm{for all }i < k\\
\sigma_k(\by) &> \sigma_k(\bx) + \eps
\end{align*}
and go on to define $\bx \in X$ to be $\eps$-close to a lexicographic maximum of $X$ if there is no $\by \in X$ such that $\by \succ_\eps \bx$ (see their Section 3.2, and in that section let $\alpha = 1$). These definitions are incompatible. Consider 
\[
X = \left\{\bx_1 = \begin{pmatrix} 10\\ 1\\ 1 \end{pmatrix}, \bx_2 = \begin{pmatrix} 10 - \eps\\ 1 - \eps\\ 1 - \eps \end{pmatrix}, \bx_3 = \begin{pmatrix} 5\\ 5\\ 1 - \eps \end{pmatrix}\right\}
\]
Note that $\bx_1$ is the only lexicographic maximum of $X$. So only $\bx_1$ and $\bx_2$ are $\eps$-close to a lexicographic maximum by our definition, and only $\bx_1$ and $\bx_3$ are $\eps$-close to a lexicographic maximum by their definition.

\end{document}